\documentclass[10pt]{amsart}


\title{The representation theory
of\\Sylow 2-subgroups of symmetric groups}

\author{
  S Narayanan
  }
\address{
  The Institute of Mathematical Sciences (HBNI), Chennai
  }
\email{sridharn@imsc.res.in}
  
\date{\today}
\usepackage{caption}
\usepackage{filecontents}
\usepackage{amsmath, amssymb, hyperref,amsthm, graphicx, mathtools,,scalerel}
\usepackage[table]{xcolor}
\usepackage[section]{placeins}
\usepackage{tikz}
\usepackage{cite}
\usepackage{float}
\usepackage{inputenc}
\usepackage[export]{adjustbox}
\usetikzlibrary{shapes.geometric}
\usepackage[tight,footnotesize]{subfigure}
\usepackage{etoolbox}
\usepackage{ytableau}
\newcommand{\zerodisplayskips}{%
  \setlength{\abovedisplayskip}{0pt}%
  \setlength{\belowdisplayskip}{0pt}%
  \setlength{\abovedisplayshortskip}{0pt}%
  \setlength{\belowdisplayshortskip}{0pt}}
\appto{\normalsize}{\zerodisplayskips}
\appto{\small}{\zerodisplayskips}
\appto{\footnotesize}{\zerodisplayskips}

\newtheorem{prop}{Proposition}[section]

\newtheorem{lemma}{Lemma}[section]
\newtheorem{definition}{Definition}[section]

\newtheorem{theorem}{Theorem}[section]
\newtheorem{corollary}[theorem]{Corollary}
\theoremstyle{remark}
\newtheorem*{remark}{Remark}

\newcommand{\Dim}{\mathrm{dim}}
\newcommand{\Or}{\mathrm{\rho}}
\newcommand{\Extp}{\mathrm{Ext^+}}
\newcommand{\Extn}{\mathrm{Ext^-}}
\newcommand{\Extpn}{\mathrm{Ext^{\pm}}}
\newcommand{\Id}{\mathrm{Id}}
\newcommand{\Ind}{\mathrm{Ind}}
\newcommand{\Res}{\mathrm{Res}}
\newcommand{\tikznode}[3][inner sep=0pt]{\tikz[remember
picture,baseline=(#2.base)]{\node(#2)[#1]{$#3$};}}

\begin{document}
\ytableausetup{centertableaux}

\begin{abstract}
We use binary trees to study the Bratteli diagram of Sylow 2-subgroups of symmetric groups. We show that it is simple, has a recursive structure, and self-similarities at all scales. We contrast its subgraph of one-dimensional representations
with the Macdonald tree. We exploit the recursive structure to find the multiplicities of irreducible characters in the restriction to a Sylow 2-subgroup of odd-dimensional representations of the symmetric group $S_{2^k}$.
\end{abstract}

\maketitle

\section{Introduction}
\label{sec:intro}

Given a finite group $G$, a prime integer $p$ and a Sylow p-subgroup $P$ of $G$, the McKay conjecture states that the number of irreducible representations of $G$ with degree coprime to $p$ is equal to the number of irreducible representations of the normaliser of $P$ in $G$ whose degrees are coprime to $p$. The conjecture was proved for the family of symmetric groups by Macdonald in \cite{MR289677}, and for arbitrary groups when $p=2$ by Sp{\"a}th and Malle in \cite{MR3549625}. When $p=2$ and $G$ is the symmetric group $S_n$, the Sylow subgroup $P$ is self-normalising. Thus we know that there are as many odd-dimensional representations of $S_n$ as there are one-dimensional representations of a Sylow 2-subgroup of $S_n$.

Odd-dimensional irreducible representations of symmetric groups were studied by Ayyer, Prasad and Spallone in \cite{MR3510808}. In particular, it is known (see \cite[Theorem 1]{MR3510808}) that the subgraph of the Young graph comprising odd-dimensional representations of $S_n$ is a rooted binary tree that branches at every even level. This tree is called the Macdonald tree. 

As we shall see in this work, the branching graph of one-dimensional representations of Sylow 2-subgroups of symmetric groups is also a rooted binary tree that branches at every even level. In uncovering this structure we offer an overview of the character theory of these subgroups. This theory is recursive and self-similar, making the association to binary trees in  \cite[Sequence A006893]{oeis} natural. Here we describe branching rules for these characters as combinatorial operations on these `tree-like' objects. We compare the aforementioned subgraph to the Macdonald tree from \cite{MR3510808}. We also study the restriction of representations of the symmetric group $S_n$ to a Sylow 2-subgroup, and when $n$ is a power of $2$, we obtain the multiplicities of every irreducible representation of the Sylow 2-subgroup occuring in the restriction of an odd-dimensional representation (i.e., representations corresponding to hook partitions) of $S_n$. 

The main results of this manuscript are the following:
\begin{itemize} 
\item  Theorem \ref{th:mainA}, which describes the branching rules for a family of Sylow 2-subgroups as combinatorial operations on forests of binary trees\footnote{see Definition \ref{def:for} for the definition of a forest of trees.}. As a result of this we observe a self-similarity in the Bratteli diagram of this family of subgroups. 
\item Theorem \ref{th:mainB}, which rephrases the branching rules for one-dimensional irreducible representations of these sugroups as operations on sequences of binary strings. This allows us to define this subgraph recursively. It has the structure of a binary tree, but is not isomorphic to the Macdonald tree.
\item Theorem \ref{th:mainC}, which provides a recursive formula for the multiplicity of an irreducible representation of a Sylow 2-subgroup of $S_{2^k}$ in the restriction of an odd-dimensional representation of $S_{2^k}$ to this subgroup.
\end{itemize}

Section \ref{sec:con_not} introduces certain preliminaries and notation. The first subsection is a description of the conjugacy classes and irreducible representations of Sylow 2-subgroups of $S_n$, while the second and third subsections serve as a primer on binary trees and introduce a bijection between representations (and conjugacy classes) of a Sylow 2-subgroup and forests of binary trees. The final subsection contains the definitions and concepts related to hook partitions, which are required in Section \ref{sec:res}. Section \ref{sec:brat} describes the Bratteli diagram for a family of Sylow 2-subgroups of $S_n$. In the next section we focus on the subgraph of one-dimensional representations in this Bratteli diagram. Section \ref{sec:res} focuses on the restriction of irreducible representations of a symmetric group to a Sylow 2-subgroup, which facilitates the explicit description of the bijection in  \cite[Theorem 1.1]{MR3687936}. In the final section we define generating functions for the dimensions of irreducible representations, and for the sizes of the conjugacy classes of these subgroups. 

A Sage implementation can be found at: 

https://github.com/sridharpn/2-Sylow.

\section{Preliminaries and Notation}
\label{sec:con_not}
Throughout this paper, $n$ is a positive integer with the binary expansion $$n=2^{k_1}+\dotsb+2^{k_s},$$ with $k_1 > \dotsb >k_s$. 
\begin{definition}
\label{def:bin_digits} 
The binary digits of $n$, denoted $Bin(n)$ is the set $\{k_1, \dotsc ,k_s\}$.
\end{definition}

Sylow 2-subgroups of $S_n$ are denoted $P_n$, and when $n=2^k$ for a nonnegative integer $k$, the Sylow 2-subgroup is denoted by $H_k$. 
 
\subsection{Structure and representation theory of $P_n$}

The structure of Sylow p-subgroups is well studied (see \cite{MR0028834}). It is known that:
\begin{displaymath}
P_n= \prod_{k \in Bin(n)}H_k.
\end{displaymath}  
It is also known that $H_k\cong H_{k-1} \wr C_2$, where $C_2$ is the cyclic group of order 2. An element of this wreath product is denoted $(\sigma_1,\sigma_2)^{\epsilon}$, where $\sigma_1,\sigma_2 \in H_{k-1}$ and $\epsilon \in C_2={\pm 1}$. The identity element of the group is denoted $\Id$. Multiplication is defined as follows:
\begin{displaymath}
(\sigma_1,\sigma_2)^{\epsilon_1}(\tau_1,\tau_2)^{\epsilon_2}=
\begin{cases}
(\sigma_1\tau_1,\sigma_2\tau_2)^{\epsilon_1\epsilon_2} & {\epsilon_1=1},\\
(\sigma_1\tau_2,\sigma_2\tau_1)^{\epsilon_1\epsilon_2} & {\epsilon_1=-1}.
\end{cases}
\end{displaymath}

Elementary computations yield the three types of conjugacy classes of $H_k$.
\begin{definition}
\label{def:conj_type}
Given an element $\sigma$, let $[\sigma]$ denote its conjugacy class. Then we have:
\begin{itemize}
\item $[\sigma]$ is of Type I if $$[\sigma]=[(\sigma_1,\sigma_1)^1],$$ for an element $\sigma_1 \in H_{k-1}$.
\item $[\sigma]$ is of Type II if $$[\sigma]=[(Id,\sigma_1)^{-1}],$$ for elements $\sigma_1 \in H_{k-1}$.
\item $[\sigma]$ is of Type III if $$[\sigma]=[(\sigma_1,\sigma_2)^1],$$ for elements $\sigma_1,\sigma_2 \in H_{k-1}$ and $[\sigma_1] \neq [\sigma_2]$.
\end{itemize}
\end{definition}
The cardinalities of the above listed classes (denoted $c_k([\sigma])$) and the number of classes of each are listed in Table \ref{table:conj_class}. The total number of conjugacy classes of the group $H_k$ is denoted $C_k$ in this table.

\begin{table}[h]
\begin{center}
\begin{tabular}{|c|c|c|c| } 

 \hline

 Type & Representative & \# classes & Size of class($c_k$) \\ 

 \hline
 I & $[(\sigma,\sigma)^1]$ & $C_{k-1}$ & $c_{k-1}([\sigma])^2$ \\  
 \hline
 II & $[(\Id,\sigma)^{-1} ]$ & $C_{k-1}$ & $|H_{k-1}|c_{k-1}([\sigma])$\\
 \hline
 III & $[(\sigma_1,\sigma_2)^1]$ & $C_{k-1} \choose 2$ &
$2c_{k-1}([\sigma_1])c_{k-1}([\sigma_2])$ \\ 

 \hline

\end{tabular}
\caption{Conjugacy classes of $H_k$}
\label{table:conj_class}
\end{center}
\end{table}

The enumeration of characters of Sylow 2-subgroups is a particular instance of characters of wreath products; we refer the reader to  \cite{MR1503352} and \cite{MR409621} for details on the methods used. According to \cite{MR1503352}, all irreducible representatives of $H_k$ are obtained as constituents in the induction of irreducible representations from the normal subgroup $H_{k-1} \times H_{k-1}$ to $H_k$. The irreducible representations of $H_{k-1} \times H_{k-1}$ are tensor products of two irreducible representations of $H_{k-1}$.

Let $\phi_1$ and $\phi_2$ be irreducible representations of $H_{k-1}$. If $\phi_2$ is not isomorphic to $\phi_1$, then $\Ind_{H_{k-1} \times H_{k-1}}^{H_k}(\phi_1 \otimes \phi_2)$ is an irreducible representation of $H_k$. We denote it $\Ind(\phi_1,\phi_2)$. The character values for $\Ind(\phi_1,\phi_2)$ are obtained by \cite[Chapter 5, Pg 64]{MR1280461}:
\begin{center}
\label{eq:rep_value_ind}
\begin{equation}
\Ind(\phi_1,\phi_2)((\sigma_1,\sigma_2)^\epsilon)=
\begin{cases}
\phi_1(\sigma_1)\phi_2(\sigma_2)+\phi_1(\sigma_2)\phi_2(\sigma_1) & \text{if
$\epsilon = 1$},\\
0 & \text{otherwise.}
\end{cases}
\end{equation}
\end{center} 

If however $\phi_1$ and $\phi_2$ are isomorphic, with $\phi$ the representative of their common isomorphism class, the induced representation $\Ind_{H_{k-1} \times H_{k-1}}^{H_k}(\phi \otimes \phi)$ is the sum of two irreducible representations of $H_k$. We call these the extensions of $\phi \otimes \phi$.  The restriction of either extension to $H_{k-1} \times H_{k-1}$ is $\phi \otimes \phi$. It remains to find the character values of the two extensions on classes of Type II (see Definition \ref{def:conj_type}). From \cite{MR409621} we have that the values of the two extensions of $\phi \otimes \phi$ on the class $(\Id,\sigma)^{-1}$ are $\phi(\sigma)$ and $-\phi(\sigma)$. Thus we denote these extensions $\Extp(\phi)$ and $\Extn(\phi)$ respectively. 

\begin{center}
\label{eq:rep_value_ass}
\begin{equation}
\Extpn(\phi)((\sigma_1,\sigma_2)^\epsilon)=
\begin{cases}
\phi(\sigma_1)\phi(\sigma_2) & \text{if
$\epsilon = 1$},\\
\pm \phi(\sigma_1 \sigma_2) & \text{otherwise.}
\end{cases}
\end{equation}
\end{center}

Now we define three types of representations, as we did for conjugacy classes in Definition \ref{def:conj_type}.
\begin{definition}
\label{def:rep_type} Given an irreducible representation $\phi$ of $H_k$, we have:
\begin{itemize}
\item $\phi$ is of Type I if $$\phi=\Extp(\phi_1),$$ for an irreducible representation $\phi_1$ of $H_{k-1}$.
\item $\phi$ is of Type II if $$\phi=\Extn(\phi_1),$$ for an irreducible representation $\phi_1$ of $H_{k-1}$.
\item $\phi$ is of Type III if $$\phi=\Ind(\phi_1,\phi_2),$$ for nonisomorphic irreducible representations $\phi_1$ and $\phi_2$ of $H_{k-1}$.
\end{itemize}
\end{definition}
These results are summarised in Table \ref{table:irreps}. Based on Table \ref{table:irreps} it may be observed that the character table of $H_k$ can be recursively obtained. The template for doing so is Table \ref{table:template}. The recursive process is illustrated for $k=2$ in Table~\ref{table:H2}.
 
\begin{table}[h]	
\begin{center}

\caption{Irreducible characters of $H_k$}
\label{table:irreps}
\scalebox{0.8}
{
\begin{tabular}{|c|c | c | c| c|}

\hline

Type&Notation&Description & Action on $(\sigma_1,\sigma_2)^1$ & Action on $(\Id,\sigma)^{-1}$\\

\hline
I&$\Extp(\phi)$ & Positive extension of  $\phi \otimes \phi$ & $\phi(\sigma_1)\phi(\sigma_2)$ & $\phi(\sigma)$ \\
\hline
II&$\Extn(\phi)$ & Negative extension of  $\phi \otimes \phi$ & $\phi(\sigma_1)\phi(\sigma_2)$ & $ -\phi(\sigma)$ \\
\hline  
III&$\Ind(\phi_1,\phi_2)$ & Induced from $\phi_1 \otimes \phi_2$ & $\phi_1(\sigma_1)\phi_2(\sigma_2)$ & 0\\
&&&$+ \phi_1(\sigma_2)\phi_2(\sigma_1)$ &\\
\hline
\end{tabular}}
\end{center}
\end{table} 

\begin{remark}
\label{rem:dim}
From Table \ref{table:irreps} we know the dimensions of the representations of each type. Thus we have $\Dim(\Extpn(\phi))= \Dim(\phi)^2$ and $\Dim(\Ind(\phi_1,\phi_2))= 2\Dim(\phi_1)\Dim(\phi_2)$. 
\end{remark}

\begin{table}[H]
\caption{Template for the character table for $H_k$}
\label{table:template}
\begin{center}
\scalebox{0.8}{
\begin{tabular}{|c |c | c | c|}

\hline

&Type I & Type II & Type III\\

$\Extp(\phi)$ & \cellcolor{green!25}$\phi(\sigma_1)\phi(\sigma_2)$ & \cellcolor{green!25}$\phi(\sigma)\phi(\sigma)$ & \cellcolor{blue!25}character table for $H_{k-1}$ \\

$\Extn(\phi)$ & \cellcolor{green!25}$\phi(\sigma_1)\phi(\sigma_2)$ & \cellcolor{green!25}$\phi(\sigma)\phi(\sigma)$ &\cellcolor{blue!50}-character table for $H_{k-1}$ \\

$\Ind(\phi_1,\phi_2)$ & \cellcolor{green!25}$\phi_1(\sigma_1)\phi_2(\sigma_2)+\phi_1(\sigma_2)\phi_2(\sigma_1) $ & \cellcolor{green!25}$2\phi_1(\sigma)\phi_2(\sigma)$ & 0\\
\hline  
\end{tabular}  
 }
\end{center}
\end{table}

\begin{table}[H]
\caption{Character table for $H_1$:}
\label{table:H1}

\begin{center}
\scalebox{1.0}{
\begin{tabular}{|c|c|c|}
\hline

& $C_1:= (\Id,\Id)^1$ & $C_2:= (\Id,\Id)^{-1}$\\

$\Extp(\Id)$ & \cellcolor{green!25}$1$ & \cellcolor{blue!25}$1$\\

$\Extn(\Id)$ &  \cellcolor{green!25}$1$ &  \cellcolor{blue!50}$-1$\\

 \hline 

 \end{tabular}
 }
\end{center}

\end{table}

\begin{table}[H]
\caption{Character table for $H_2$:}
\label{table:H2}

\begin{center}
\scalebox{0.8}{
\begin{tabular}{|c|c|c|c|c|c|}

\hline

&$(C_1,C_1)^1$ & $(C_2,C_2)^1$ & $(C_1,C_2)^1$ & $(\Id,C_1)^{-1}$ &
$(\Id,C_2)^{-1}$\\

$\Extp(\Extp(\Id)$& \cellcolor{green!25}$1$& \cellcolor{green!25}$1$&\cellcolor{green!25}$1$&\cellcolor{blue!25}$1$&\cellcolor{blue!25}$1$\\

$\Extp(\Extn(\Id)$& \cellcolor{green!25}$1$& \cellcolor{green!25}$1$&\cellcolor{green!25}$-1$&\cellcolor{blue!25}$1$&\cellcolor{blue!25}$-1$\\

$\Extn(\Extp(\Id)$& \cellcolor{green!25}$1$& \cellcolor{green!25}$1$&\cellcolor{green!25}$1$&\cellcolor{blue!50}$-1$&\cellcolor{blue!50}$-1$\\

$\Extn(\Extn(\Id)$& \cellcolor{green!25}$1$& \cellcolor{green!25}$1$&\cellcolor{green!25}$-1$&\cellcolor{blue!50}$-1$&\cellcolor{blue!50}$1$\\

$\Ind(\Extp(\Id),\Extn(\Id))$&\cellcolor{green!25}2&\cellcolor{green!25}-2&\cellcolor{green!25}0&0&0\\

\hline

 \end{tabular}
 }

\end{center}
\end{table}

\subsection{Binary trees and forests}      

Binary trees are commonly occuring objects in computer science and mathematics. For a complete introduction to these objects see \cite{MR3077152}. 

A rooted binary tree is  a tuple $(r,L,R)$- a root vertex $r$, and binary trees $L$ and $R$, denoted the left and right subtree. They are commonly depicted by connecting the root vertex $r$ to the root vertices of each of the subtrees $L$ and $R$. Given a vertex $y$ of a binary tree, it is known that there exists a unique path $r= v_0, v_1, \dotsc, v_k=y$. The height of the vertex $y$ is $k$- the number of vertices on this unique path (not counting the root vertex). Each vertex of a binary tree is connected to two possibly trivial subtrees. If both subtrees connected to a vertex are trivial, the vertex is called an external vertex. All vertices that are not external are called internal.     

For our purposes the designation of a subtree as either the right or the left is superfluous. Thus we may define binary trees formally as  a tuple $(r,S)$ of a root vertex $r$ and a possibly empty multiset $S$ of at most two binary trees. The height of a vertex is unaffected by this modification in definition. Binary trees where all the external vertices have the same height are called 1-2 binary trees. 

\begin{definition}
\label{def:tree}
A 1-2 binary tree of height $k$ is a tuple $(r,S)$ consisting of a root vertex $r$ and multiset $S$ comprising of upto two binary trees, where every external vertex of the tree has height $k$.   
\end{definition} 

We refer to 1-2 binary trees as either binary trees or trees when there is no ambiguity in doing so. A forest is a collection of trees. Given an integer $n=2^{k_1}+\dotsb+2^{k_s}$ with $k_1 > \dotsb> k_s$, and recalling convention that $Bin(n)=\{k_1,\dotsc,k_s\}$, we define:
\begin{definition}
\label{def:for}
A forest of size $n$ is an ordered collection of 1-2 binary trees $(T_1,\dotsc,T_s)$, where $T_i$ is a 1-2 binary tree of height $k_i$ for $i=1,\dotsc,s$.
\end{definition}

A forest with a single element is identified with the tree that is its only element.

\subsection{Irreducible representations and conjugacy classes as binary trees}

The number of irreducible representations of $H_k$, and the number of conjugacy classes of $H_k$, are both given by the recurrence:
\begin{align}
\label{eq:thenums}
a_k &=2a_{k-1} + {{a_{k-1}} \choose {2}},\\
a_0 &=1.   \nonumber
\end{align}

The sequence generated by Equation \eqref{eq:thenums} counts the number of 1-2 binary trees  of height $k$ (see \cite[Sequence A006893]{oeis}). This observation leads us to define bijections between 1-2 binary trees of height $k$  and  the set of irreducible representations of $H_k$ (also the set of conjugacy classes of $H_k$). 
\begin{definition}
\label{def:bij_rep}
Define a family of bijections $\theta_{2^k}$ for nonnegative integers $k$ between the set of irreducible representations of $H_k$ and the set of 1-2 binary trees of height $k$ as under:
\begin{equation}
\theta_{2^k}(\Gamma)=
\begin{cases}
(r,\{\theta_{2^{k-1}}(\phi),\theta_{2^{k-1}}(\phi)\}) &\Gamma=\Extp(\phi \otimes \phi),\\
(r,\{\theta_{2^{k-1}}(\phi)\}) &\Gamma=\Extn(\phi \otimes \phi),\\
(r,\{\theta_{2^{k-1}}(\phi_1),\theta_{2^{k-1}}(\phi_2)\}) &\Gamma=\Ind(\phi_1, \phi_2).
\end{cases}
\end{equation}
The dimension of a binary tree $T$ is denoted $\Dim(T)$ and is defined to be the dimension of its corresponding irreducible representation.
\end{definition}

\begin{definition}
\label{def:bij_con}
Choosing class representatives as in Table \ref{table:conj_class}, we define a bijection $\Theta_{2^k}$ between representatives of conjugacy classes of $H_k$ and 1-2 binary trees of height $k$ as under:
\begin{equation}
\label{eq:bij_con}
\Theta_{2^k}(\sigma)=
\begin{cases}
(r,\{\Theta_{2^{k-1}}(\sigma),\Theta_{2^{k-1}}(\sigma)\}) &\sigma=(\sigma,\sigma)^{1},\\
(r,\{\Theta_{2^{k-1}}(\sigma)\}) &\sigma=(\Id,\sigma)^{-1},\\
(r,\{\Theta_{2^{k-1}}(\sigma_1),\Theta_{2^{k-1}}(\sigma_2)\}) &\sigma=(\sigma_1,\sigma_2)^{1}.

\end{cases}
\end{equation}
The order of a binary tree $T$ is denoted $\Or(T)$ and is defined to be the size of its corresponding conjugacy class.
\end{definition}

Now we define three types of binary trees, in line with Definitions \ref{def:conj_type} and \ref{def:rep_type}.

\begin{definition}
\label{def:tree_type}
Given a 1-2 binary tree $T$ of height $k$, we have:
\begin{itemize}
\item $T$ is of Type I if $$T=(r,\{T_1,T_1\}),$$ for a 1-2 binary tree $T_1$ of height $k-1$.
\item $T$ is of Type II if $$T=(r,\{T_1\}),$$ for a 1-2 binary tree $T_1$ of height $k-1$.
\item $T$ is of Type III if $$T=(r,\{T_1,T_2\}),$$ for distinct 1-2 binary trees $T_1$ and $T_2$ of height $k-1$.
\end{itemize}
\end{definition}

These two families of bijections extend to the case where $n$ is an arbitrary integer as below:
\begin{definition}
\label{def:bij_n}
With $\theta_{2^k}$ as in Definition \ref{def:bij_rep} and $\Theta_{2^k}$ as in Definition \ref{def:bij_con} we define $\theta_n$ and $\Theta_n$ as follows:
\begin{align*}
\theta_n= \theta_{2^{k_1}} \times \dotsb \times \theta_{2^{k_s}}  \nonumber \\
\Theta_n= \Theta_{2^{k_1}} \times \dotsb \times \Theta_{2^{k_s}}.
\end{align*}
The \emph{dimension} of a forest and the \emph{order} of a forest are defined respectively to be the product of the dimensions and the product of the orders of the trees in the forest. 
\end{definition}
\subsection{Some properties of hook partitions of even size}
In what follows we present some results on the Littlewood-Richardson coefficients associated to a hook partitions $(a+1,1^b)$ (denoted $(a|b)$ in the Frobenius notation) of even size and the border-strip tableaux of such shapes. These results are required only in Section \ref{sec:res}, and are generally easy to prove and so proofs are either abridged or absent throughout this subsection.  

\begin{definition}
\label{def:half_par}
Given hook partition $\lambda= (a|b)$ of an even integer, define the partition $\frac{\lambda}{2}$ as:
\begin{equation*}
\frac{\lambda}{2}=\begin{cases}
(\frac{a-1}{2}|\frac{b}{2}) & {a \text{ odd},}\\
(\frac{a}{2}|\frac{b-1}{2}) & {\text{otherwise}.}
\end{cases}
\end{equation*}
\end{definition}

The Littlewood-Richardson coefficient $c_{\mu,\nu}^{\lambda}$ indexed by partitions $\mu,\nu,\lambda$ with $|\mu|+|\nu|=|\lambda|$ is defined to be the multiplicity of the Specht module $V_{\lambda}$ in $\Ind_{S_{|\mu|}\times S_{|\nu|}}^{S_{|\lambda|}} $. Note that $c_{\mu,\nu}^{\lambda}=c_{\nu,\mu}^{\lambda}$. 

The coefficient $c_{\mu,\nu}^{\lambda}$ may also be defined as the number of semistandard Young tableaux of skew shape $\lambda \setminus \mu$ and content $\nu$ whose reverse reading word is a lattice permutation. For a definition of these terms and a statement of this result see \cite[Theorem A1.3.3]{MR1676282}. 
 
\begin{prop}
\label{prop:lrcoeff}
Let $\lambda=(a|b)$ be a hook partition. For partitions $\mu$ and $\nu$ with $|\mu|+|\nu|=|\lambda|$, we have:
\begin{itemize}
\item $c_{\mu,\nu}^{\lambda}$ is either $0$ or $1$. If $c_{\mu,\nu}^{\lambda}>0$ then $\mu$ and $\nu$ are hook partitions.
\item With $\mu=(a_1|b_1) \subset \lambda$, $c_{\mu,\nu}^{\lambda}=1$ iff $\nu=(a-a_1-1|b-b_1)$ or $\nu=(a-a_1|b-b_1-1)$.
\item If $|\lambda|$ is even, then $c_{\mu,\mu}^{\lambda}=1$ iff $\mu=\frac{\lambda}{2}$.
\end{itemize}
\end{prop}

Thus restricting an irreducible representation of $S_n$ corresponding to a hook partition to a Young subgroup yields a sum of tensor products of Specht modules corresponding to hook partitions. The proof follows easily from the combinatorial definition of these coefficients.  

We are grateful to Steven Spallone for sharing his notes, which contain the preceding proposition as well as the following one and its proof.
 
\begin{prop}
\label{prop:lr_n}
Let $\lambda$ be a partition and $h$ be a hook in $\lambda$ with corresponding rim-hook $r$. Then
\begin{displaymath}
c_{\lambda/r,h}^{\lambda}=1.
\end{displaymath}
\end{prop}

\begin{proof}
We prove this by exhibiting a unique semistandard skew tableau of shape $r$ and content $h$ whose reverse reading word is a lattice permutation.
A cell of the rim-hook $r$ is said to be a lower cell if there exists another cell in $r$ directly above it, and is said to be an upper cell if not. Consider the labelling of $r$ by assigning a $1$ to all upper cells, and by numbering the lower cells $2,\dotsc, s$ from top to bottom. This process is illustrated in Figure \ref{fig:rise}.

The reverse reading word (entries of tableau read right to left and top to bottom) of this tableau is of the type $1\dots121\dots1\dots1 s1\dots1$, which is clearly a lattice permutation (the number of $i$s is more than the number of $i+1$s in any initial segment, for all positive integers $i$).

To show that it is unique, assume that it is so for any initial segment of the rim $r$ of length less than $l$ beginning at the north-east end. Let $k$ be the largest value assigned to a lower cell in this numbering. Then the value of the next cell must be either $1$ or $k+1$, by consideration of the content. It is easy to see that if this cell is an upper cell it must be labelled $1$, and if a lower cell must be labelled $k+1$, if the reverse reading word is to be a lattice permutation. Thus the labelling is uniquely specified. 
\end{proof}

\begin{figure}
\[
\begin{ytableau}
       \none&\none&\tikznode{a1}{1}  & \tikznode{a2}{1} &\tikznode{a3}{1}\\
       \none&\none &\tikznode{a4}{2} \\
       \none&\none &\tikznode{a5}{3}\\
        \tikznode{a6}{1} & \tikznode{a7}{1} &\tikznode{a8}{4}\\
        \tikznode{a9}{5}
\end{ytableau}
\]
\caption{The unique tableau of shape $(5,3^3,1)\setminus (2^3)$ and content $(4|4)$ whose reverse reading word is a lattice permutation (namely, $111234115$).}
\label{fig:rise}
\end{figure}

Now we turn to border-strip tableaux of hook shapes. A border strip is a connected skew shape such that if $(i,j)$ belongs to the shape then $(i+1,j+1)$ does not. A border-strip tableau of shape $\lambda$ is defined (see \cite{MR1676282}) to be a sequence $\lambda_0=\emptyset \subset \lambda_1 \subset \dots \subset \lambda_l=\lambda$, where $\lambda_i \setminus \lambda_{i-1}$ is a border strip, for all $i=1,\dotsc,l$. We denote the border strip $\lambda_i \setminus \lambda_{i-1}$ on the shape $\lambda$ by populating the cells of this border strip with the integer $i$. 
 
\begin{definition}
\label{def:istrip}
For a positive integer i, the $i$-strip of a border-strip tableau $T$ is the border strip in $T$ that is filled with the integer $i$.
\end{definition}
The size of the $i$-strip is the number of cells in the border strip. The height of the $i$-strip is denoted $ht(i)$ and is one less than the number of rows the $i$-strip occupies.

\begin{definition}  
The content of a border-strip tableau $T$ is the vector $(a_1,a_2,\dotsc)$ of nonnegative integers where $a_i$ is the size of the $i$-strip, for $i \geq 1$.
\end{definition}

The set of border-strip tableaux of shape $\lambda$ and content $\nu$ is denoted $\text{BST}(\lambda,\nu)$. 
  
\begin{definition}
The height of a border strip tableau $T$ is defined as:
$$ht(T)= \sum_{i\geq 1}ht(i).$$
\end{definition}
 
Here we restrict ourselves to border strip tableaux of hook partitions of even size, where the size of each $i$-strip is an even integer. There is a unique tiling of such a hook shape with dominoes, and each $i$-strip can be regarded as a union of adjoining dominoes, where both cells of each domino are filled with the integer $i$. The tableau on the left of Figure \ref{fig:domino_tab} is an example of such a shape.

Now given a hook shape $\lambda$ and a vector $\nu$ comprising only even integers, let $\frac \nu 2$ denote the vector whose entries are half the corresponding entries in $\nu$. Consider the map: 
\begin{displaymath}
\psi: \text{BST}(\lambda,\nu)\rightarrow \text{BST}(\frac\lambda 2,\frac \nu 2),
\end{displaymath}
which replaces each domino in a tableau of shape $\lambda$ by a cell with the same content. Figure \ref{fig:domino_tab} provides an example of $\psi$. 
\begin{figure}
\ytableausetup{centertableaux}
\[
\begin{ytableau}
       \tikznode{a1}{1} & \tikznode{a2}{2}&\tikznode{a3}{2} &\tikznode{a4}{2} & \tikznode{a5}{2}\\       
       \tikznode{a6}{1}\\
       \tikznode{a7}{3}\\
       \tikznode{a8}{3}\\
\end{ytableau}
~\xrightarrow{\psi}~
\begin{ytableau}
       \tikznode{a1}{1} & \tikznode{a2}{2}&\tikznode{a3}{2}\\       
       \tikznode{a4}{3}\\
\end{ytableau}
\]
\caption{A tableau in $\text{BST}((4|3),(2,4,2))$ and its image under the map $\psi$.}
\label{fig:domino_tab}
\end{figure}
\begin{prop}
\label{prop:half}
Let $\lambda=(a|b)$ be a hook partition of even size, $\nu$ be a vector of even integers and $P \in \text{BST}(\lambda,\nu)$. Then we have:
\begin{itemize}
\item The map $\psi$ is surjective.
\item Two cells in $\psi(P)$ are connected vertically or horizontally if the corresponding dominoes in $P$ are connected either vertically or horizontally respectively.
\item Let $\text{vert}(\lambda)$ denote the number of vertical dominoes of $\lambda$. Then:
$$ht(P)=ht(\psi(P))+\text{vert}(\lambda).$$
 
\end{itemize}
\end{prop} 
\begin{proof}
The first of these assertions is easy to prove. In particular a tableau of shape $(a|b)$ is the image under $\psi$ of two tableau- one each of shape $(2a|2b+1)$ and $(2a+1|2b)$.

The second assertion is trivial to prove as well.  We give a sketch of the argument for the third.
 
The height of an $i$-strip in $P$ is either:  
$$ht(i)= 2|\text{vert}_i(P)|-1,$$ if either the strip does not contain the cell $(1,1)$ or if it does and the cell $(1,1)$ belongs to a vertical domino. If the strip contains the cell $(1,1)$ and this cell belongs to a horizontal domino:
$$ht(i)= 2|\text{vert}_i(P)|.$$ 
  
Since each domino is changed to a cell by $\psi$, the height of the $i$-strip in $\psi(P)$ is either:
$$ht(i)= |\text{vert}_i(P)|-1,$$ if either the $i$-strip in $P$ does not contain the cell $(1,1)$ or if it does and the cell $(1,1)$ belongs to a vertical domino, or
$$ht(i)= |\text{vert}_i(P)|,$$ if the $i$-strip in $P$ contains the cell $(1,1)$ and this cell belongs to a horizontal domino. 
The difference between these two expressions is precisely $\text{vert}_i(P)$. Summing over all $i$-strips proves the result.
\end{proof}

\begin{remark}
Proposition \ref{prop:half} is a special case of the bijection in \cite[Theorem 25]{MR1855862} between domino tableaux of shape $\lambda$ and pairs of tableaux $(T_1,T_2)$ where the shapes of $T_1$ and $T_2$ are the two quotient of $\lambda$. When $\lambda$ is a hook shape, one of the two shapes in the quotient is empty, motivating the map defined above. Note, however, that the map $\psi$ is not a bijection.    
\end{remark}
 
 \section{The Bratteli diagram}
 \label{sec:brat}
 Given the family of subgroups $\{P_n\}_{n \geq 0}$ with $P_0 \subset \dotsb P_{n-1} \subset P_{n} \subset \dotsb$, the Bratteli diagram of this family, denoted  $\mathbb{P}$, is the graded poset whose vertices at the $n$th level are indexed by the irreducible representations of $P_n$, for all $n \geq 0$.  An edge exists between the vertex corresponding to the representation $\gamma$ of $P_{n-1}$ and the vertex corresponding to the representation $\Gamma$ of $P_{n}$ if $\gamma$ is a constituent of $\Res_{P_{n-1}}^{P_n}(\Gamma)$. 
 
Observe that  $P_{2^k-1}=H_{k-1} \times P_{2^{k-1}-1}$. Thus we have:
$$\Res^{H_k}_{P_{2^k-1}}(\Gamma)= \Id \times \Res^{H_{k-1}}_{P_{2^{k-1}-1}}\circ  \Res^{H_k}_{H_{k-1} \times H_{k-1}}(\Gamma).$$ We denote this restriction by $r_{2^k}$, and note that:
 
\begin{equation}
\label{eq:rk}
r_{2^k}=(\Id \times r_{2^{k-1}}) \circ \Res^{H_k}_{H_{k-1} \times H_{k-1}}.
\end{equation}

For instance if $\Gamma$ is an irreducible representation of $H_k$ of Type III (see Table \ref{table:irreps}) and $\Res^{H_k}_{H_{k-1} \times H_{k-1}}(\Gamma)=\phi_1 \otimes \phi_2 + \phi_2 \otimes \phi_1$,  then:
\begin{equation*}
r_{2^k}(\Gamma)=\phi_1 \otimes r_{2^{k-1}}(\phi_2)+\phi_2 \otimes r_{2^{k-1}}(\phi_1).
\end{equation*}

Following as before the convention $Bin(n)=\{k_1,\dotsc,k_s\}$, an irreducible representation of $P_n$ is of the form $\phi_1 \otimes \dotsb \phi_s$, where $\phi_i$ is an irreducible representation of $H_{k_i}$ for $i=1,\dotsc,s$. We extend Equation \eqref{eq:rk} to the general case by restricting the last component, $\phi_s$. Thus:

\begin{equation}
\label{eq:rn}
r_{n}=\Id \times \dotsb \Id \times r_{2^{k_s}}.
\end{equation}

\begin{definition}
\label{def:down_rep}
Let $\Gamma$  be an irreducible representation of $P_n$, for some $n \geq 1$. Then the down-set of $\Gamma$, denoted $\Gamma^{-}$, is defined to be the multiset of representations of $P_{n-1}$ with each representation $\gamma$ occuring in $\Gamma^-$ as many times as $\gamma$ occurs in $r_{n}(\Gamma)$. The up-set of $\Gamma$, denoted denoted $\Gamma^{+}$, is defined to be the multiset of representations of $P_{n+1}$ such that $\Gamma$ occurs in their down-set, each repeated as many times as $\Gamma$ occurs in its down-set.
\end{definition}

We make the analogous definition of the down-set and up-set of a 1-2 binary tree:
\begin{definition}
\label{def:down_tree}
Given a forest $F$ of size $n$, let $F=\theta_{n}(\Gamma)$  as in Definition \ref{def:bij_rep} for some irreducible representation $\Gamma$ of $P_n$. Define the down-set $F^{-}$ and the up-set $F^{+}$ as the multisets obtained as under:  
\begin{align*}
F^{-}=\{\theta_{n-1}(\gamma)| \gamma \in \Gamma^{-}\},\\
F^{+}=\{\theta_{n+1}(\gamma)| \gamma \in \Gamma^{+}\}.
\end{align*}
\end{definition}

We define an operation on 1-2 binary trees to retrieve the downset $\tau^-$ from the tree $\tau$:
\begin{definition}
\label{def:res_tree}
\begin{align*}
\Res(\tau) = \begin{cases} 
T \times \Res(T) &\tau=(r,\{T,T\}),\\
T \times \Res(T) &\tau=(r,\{T\}),\\
T_1 \times \Res(T_2) \cup T_2 \times \Res(T_1) & \tau=(r,\{T_1,T_2\}), \\
\emptyset &\tau=(r,\emptyset).
\end{cases}
\end{align*}
\end{definition}
\begin{remark}
For the trees $\tau=(r,\{T,T\})$ and $\tau=(r,\{T\})$, $\Res(\tau)$ is the multiset formed by adding $T$ to the front of every forest in $\Res(T)$. If $\tau=(r,\{T_1,T_2\})$, and $T_1$ and $T_2$ are distinct subtrees, the multiset $\Res(\tau)$ is a union of two multisets, one where $T_1$ was added to the front of every forest of $\Res(T_2)$, and one where $T_2$ was added to the front of every forest of $\Res(T_1)$. The two terms of this union are disjoint since the largest trees in the forests of each ($T_1$ and $T_2$) are distinct. 
\end{remark}
\begin{prop}
\label{prop:res_tree}
Let $T$ be a 1-2 binary tree, and $T^{-}$ be its downset. Then $T^{-}=\Res(T)$.
\end{prop}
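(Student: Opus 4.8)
\section*{Proof proposal}

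The plan is to induct on the height $k$ of $T$ and to show that each clause of the combinatorial recursion in Equation \eqref{eq:res_tree} is exactly the image under the bijection $\theta_{2^k-1}$ of the representation-theoretic restriction computed by $r_{2^k}$ in Equation \eqref{eq:rk}. The point to keep in view throughout is that, by construction, $r_{2^k}=\Res_{P_{2^k-1}}^{H_k}$; hence if $T=\theta_{2^k}(\Gamma)$, the multiset of $\theta_{2^k-1}$-images of the irreducible constituents of $r_{2^k}(\Gamma)$, each counted with its multiplicity, is \emph{by definition} the downset $T^-$. Thus proving $\Res(T)=T^-$ amounts to decomposing $r_{2^k}(\Gamma)$ into irreducibles, applying $\theta_{2^k-1}$, and checking the result agrees termwise and with correct multiplicities with the relevant clause of Equation \eqref{eq:res_tree}, according to the type of $\Gamma$.

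For the base case $k=0$ the tree is trivial, $P_{2^0-1}=P_0$ is the trivial group, and the restriction of the unique representation of $H_0$ is the empty forest, matching the base clause of Equation \eqref{eq:res_tree}. For the inductive step the essential structural input is the factorization $P_{2^k-1}=H_{k-1}\times P_{2^{k-1}-1}$ together with the compatibility $\theta_{2^k-1}=\theta_{2^{k-1}}\times\theta_{2^{k-1}-1}$ from Equation \eqref{eq:bij_n}. Under this identification, left-tensoring a representation of $P_{2^{k-1}-1}$ with a fixed irreducible $\phi$ of $H_{k-1}$ corresponds precisely to prepending the height-$(k-1)$ tree $\theta_{2^{k-1}}(\phi)$ to the front of the associated forest; this is the bridge between the operation $T'\times(-)$ on forests and the operation $\phi\otimes(-)$ on representations.

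With this in place the inductive step proceeds by the three types of $\Gamma$, using Equations \eqref{eq:bij_rep} and \eqref{eq:rk}. When $\Gamma=\Extp(\phi)$ or $\Gamma=\Extn(\phi)$, one has $\Res_{H_{k-1}\times H_{k-1}}^{H_k}(\Gamma)=\phi\otimes\phi$, so Equation \eqref{eq:rk} gives $r_{2^k}(\Gamma)=\phi\otimes r_{2^{k-1}}(\phi)$; decomposing $r_{2^{k-1}}(\phi)$ and applying the compatibility above yields $T^-=T'\times(T')^-$ with $T'=\theta_{2^{k-1}}(\phi)$, and the inductive hypothesis $(T')^-=\Res(T')$ delivers the first (respectively second) clause of Equation \eqref{eq:res_tree}. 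When $\Gamma=\Ind(\phi_1,\phi_2)$ with $\phi_1\not\cong\phi_2$, the paper's displayed computation gives $r_{2^k}(\Gamma)=\phi_1\otimes r_{2^{k-1}}(\phi_2)+\phi_2\otimes r_{2^{k-1}}(\phi_1)$, which translates under $\theta_{2^k-1}$ into $T_1\times\Res(T_2)\cup T_2\times\Res(T_1)$ with $T_i=\theta_{2^{k-1}}(\phi_i)$, matching the third clause.

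The step I expect to demand the most care is the multiplicity bookkeeping in the Type III case, since $T^-$ is a genuine multiset and the two summands must not silently merge. This is where I would argue explicitly that the two multisets $T_1\times\Res(T_2)$ and $T_2\times\Res(T_1)$ are disjoint: every forest produced by the first summand has largest (tallest) tree $T_1$, every forest produced by the second has largest tree $T_2$, and $T_1\ne T_2$ because $\phi_1\not\cong\phi_2$ and $\theta_{2^{k-1}}$ is a bijection. Disjointness guarantees that the multiplicities of the union are simply inherited from $\Res(T_2)$ and $\Res(T_1)$, so no cancellation or addition occurs. A secondary point worth checking, handled cleanly by the inductive hypothesis, is that $r_{2^{k-1}}(\phi)$ decomposes with exactly the nonnegative integer multiplicities recorded in the downset of $T'$; this is what allows the passage from representations to multisets of forests to preserve multiplicity at every stage.
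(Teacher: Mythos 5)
Your proposal is correct and follows essentially the same route as the paper: induction on the height of the tree, splitting the inductive step by the three types via Equations \eqref{eq:rk} and \eqref{eq:bij_rep}, with the Type III case handled exactly as in the paper's displayed computation. Your extra care about disjointness of $T_1\times\Res(T_2)$ and $T_2\times\Res(T_1)$ is a point the paper makes in the text preceding the proposition rather than inside the proof, so you have only made explicit what the paper leaves implicit.
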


\begin{proof}
The proof follows by induction on the height of the tree. When $k=0$, let $\Id=(r,\emptyset)$ denote the trivial representation of $H_0$. Then the result is a matter of definition, so we begin with $k=1$. There are two trees of height $1$, namely $T_1= (r,\{\Id,\Id\})$ and $T_2=(r,\{\Id\})$. These correspond to the two representations in Table ~\ref{table:H1}. As can be seen, the restriction of both of these representations to the diagonal subgroup $D \cong H_0$ in $H_0 \times H_0$ is the trivial representation. The result of $\Res(T_1)$ and $\Res(T_2)$ is  $\Id$.

Now assume the result is true for trees of height less than $k$. Let $\Gamma$ be the representation of $H_k$ corresponding to $T$.  
If $T$(and thus $\Gamma$) are of type III(see Table \ref{table:irreps}), and let $\Gamma=Ind(\phi_1, \phi_2)$, then by Equation ~\eqref{eq:rep_value_ind} and Equation ~\eqref{eq:rk}:
\begin{align*}
\Res^{H_k}_{P_{2^k-1}}(\Gamma)=\sum_{i,j \in {1,2}, i \neq j} \phi_i \otimes \Res^{H_{k-1}}_{P_{2^{k-1}-1}}(\phi_j).  
\end{align*} 
Since the result holds for $k-1$, and by Definition \ref{def:bij_rep}, $T_i$ is the tree corresponding to $\phi_i$ for $i=1,2$, the down-set $T^{-}$ is $\Res(T)$.
A similar computation for trees of type I and type II completes the proof.
\end{proof}

From Definition \ref{def:res_tree} and Proposition \ref{prop:res_tree} we have:

\begin{corollary}
\label{cor:downset_k}
Given a tree $\tau$ of height $k$:
\begin{align*}
\tau^{-}=
\begin{cases}
T \times T^{-} &\tau=(r,\{T,T\}),\\
T \times T{-} &\tau=(r,\{T\}),\\
T_1 \times T_2^{-} \cup T_2 \times T_1^{-} & \tau=(r,\{T_1,T_2\}), \\
\emptyset &\tau=(r,\emptyset).
\end{cases}
\end{align*}
\end{corollary}

\begin{corollary}
\label{cor:upset_k}
Given a forest $F$ of size $2^k-1$, let $F(1)$ denote the largest tree in the forest and $\overline{F}$ denote the tuple $F$ with the tree $F(1)$ removed:
\begin{align*}
F^{+}=
\begin{cases}
\{(r,\{F(1),T\})|T \in \overline{F}^{+}\} &F(1) \not \in \overline{F}^{+},\\
\{(r,\{F(1),T\})|T \in \overline{F}^{+}\} \cup \{(r, \{F(1)\})\} &F(1) \in \overline{F}^{+}.
\end{cases}
\end{align*}
\end{corollary}

\begin{corollary}
\label{cor:mult_k}
Given a tree $T$ of height $k$ for any integer $k \geq 1$, let $\text{set}(T^{-})$ denote the set of distinct elements in $T^{-}$. Then $T^{-}=\text{set}(T^{-})$.
\end{corollary}

\begin{proof}
The proof proceeds by induction. The result is easily verified for $k=1$. If it holds for all integers less than $k$, consider the down-set of a tree $T$ of height $k$:

If $T$ is of Type I or Type II, by corollary \ref{cor:downset_k}, the down-set of $T$ may be identified with the down-set of its only distinct subtree (by deleting the largest tree from each forest). Then $T^{-}=\text{set}(T^{-})$ by the induction hypothesis.

If $T$ is of Type III, and let $T_1$ and $T_2$ denote its distinct subtrees. Again by \ref{cor:downset_k} we know that $T^{-}$ is the union of two sets (multisets that are known to be multiplicity free by the induction hypothesis). One of these sets consists of forests where the largest tree is $T_1$, while the other consists of forests where the largest tree is $T_2$. This union is disjoint since these trees are distinct. 
\end{proof}

The operator $\Res$ can be extended to forests of arbitrary size, following the cue of Equation \eqref{eq:rn}. With $F=(T_1,\dotsc,T_n)$ a forest of size $n$, we have: 

\begin{definition}
\label{def:res_forest}
Define an operator $\Res$ from forests to multisets of forests as under:
\begin{displaymath}
\Res(F)= (T_1,\dotsc,T_{s-1})\times \Res(T_s). 
\end{displaymath}
\end{definition}

The following proposition is easy to observe from this defintion. 

\begin{prop}
\label{prop:res_forest}
Let $F$ be a forest of binary trees, and $F^{-}$ be its down-set. Then $F^{-}=\Res(F)$.
\end{prop}

We may combine these results into a combinatorial branching rule on forests of binary trees. Recall that a tree $T$ of height $k$ is identified with the forest $F=(T)$ of size $2^k$.

\begin{theorem}
\label{th:mainA}
Given a forest $F=(F_1,\dotsc,F_s)$ of size $n$:
\begin{enumerate}
\item
Define  $\underline{F}$ to be the forest $F$ without the element $F_{s}$.
The down-set of $F$ is given by:
\begin{align*}
F^{-}= \underline{F} \times F_{s}^{-}
\end{align*}\\
\item
Let $d$ denote the smallest nonnegative integer that does not occur in $Bin(n)$. Partition $F$ as the tuple $F_1\times F_2$, where $F_1$ is the tuple of trees in $F$ with height greater than $d$, and $F_2$ is the tuple of trees of height less than $d$. Then the up-set of $F$ is given by:
\begin{align*}
F^{+}=F_1 \times F_2^{+}.
\end{align*}
\end{enumerate}
\end{theorem}

Thus the branching at each level replicates the branching at the $2^k$th level, for some nonnegative integer $k$.

\begin{prop}
\label{cor:mult}
The branching in $\mathbb{P}$ is multiplicity free.
\end{prop}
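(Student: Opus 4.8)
The plan is to recognise that multiplicity freeness is equivalent to the assertion that, for every forest $F$, the down-set $F^-$ contains no repeated elements. Indeed, under the identification of $\mathbb{P}$ with the poset of forests, and by Proposition \ref{prop:res_tree} together with Equation \eqref{eq:res_forest}, the multiplicity with which an irreducible representation $\gamma$ of $P_{n-1}$ occurs in $\Res_{P_{n-1}}^{P_n}(\Gamma)$ is exactly the number of times the forest $\theta_{n-1}(\gamma)$ appears in $\Res(F)$, where $F=\theta_n(\Gamma)$. Hence it suffices to show that no forest appears more than once in $\Res(F)$; that is, that $\Res(F)$ is a set.

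First I would establish this for trees by induction on the height $k$, using the recursion of Equation \eqref{eq:res_tree}. The cases $k=0$ and $k=1$ are immediate from the base of the recursion. For the inductive step, suppose $\Res(S)$ has pairwise distinct elements for every tree $S$ of height at most $k-1$. When $\tau=(r,\{T,T\})$ or $\tau=(r,\{T\})$ we have $\Res(\tau)=T\times\Res(T)$, which prepends the fixed tree $T$ to every forest of $\Res(T)$; since prepending a fixed tree is injective on forests, distinctness is inherited from the inductive hypothesis. When $\tau=(r,\{T_1,T_2\})$ with $T_1\neq T_2$, we have $\Res(\tau)=(T_1\times\Res(T_2))\cup(T_2\times\Res(T_1))$, and each of the two multisets is distinct by the same prepending argument.

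The one point needing care, and the main obstacle, is the disjointness of these two multisets. Here I would invoke the ordering convention $k_1>\dots>k_s$ on forests: the tallest (first) component of every forest in $T_1\times\Res(T_2)$ is $T_1$, whereas the tallest component of every forest in $T_2\times\Res(T_1)$ is $T_2$. Since $T_1$ and $T_2$ are distinct trees of the common height $k-1$, a forest from the first multiset can never coincide with one from the second. Thus the union is disjoint, and $\Res(\tau)$ is a set, completing the induction for trees.

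Finally I would pass to forests of arbitrary size. For $F=(T_1,\dots,T_s)$, Equation \eqref{eq:res_forest} gives $\Res(F)=(T_1,\dots,T_{s-1})\times\Res(T_s)$, which prepends the fixed prefix $(T_1,\dots,T_{s-1})$ to each element of $\Res(T_s)$. As $\Res(T_s)$ is a set by the tree case and prepending a fixed prefix is injective, $\Res(F)$ is a set as well. Consequently every constituent of a restriction occurs with multiplicity at most one, which is precisely the statement that the branching in $\mathbb{P}$ is multiplicity free.
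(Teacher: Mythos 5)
Your proof is correct and follows essentially the same route as the paper: the paper also reduces multiplicity-freeness to the down-set of a forest being a genuine set, establishes this for trees by the same induction (with the identical key observation that the two multisets in the Type III case of Equation \eqref{eq:res_tree} are disjoint because their leading trees $T_1$ and $T_2$ are distinct), and then passes to forests via Equation \eqref{eq:res_forest}. The only difference is one of presentation: the paper compresses the induction into the phrase ``a simple inductive argument,'' which you have written out in full.
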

\begin{proof}
We may reduce the proof to the branching at every $2^k$th level, and we know by Corollary \ref{cor:mult_k} that this is multiplicity free. From Theorem \ref{th:mainA}, we have this in general. 
\end{proof}

Another consequence of Theorem \ref{th:mainA} is the self-similarity of $\mathbb{P}$:
\begin{lemma}
\label{lem:branch}
Given a forest $F=(F_1,\dotsc,F_s)$ of size $2^k+m$ ($0 \leq m < 2^{k}$), let $\overline{F}=(F_2,\dotsc,F_s)$. For $m <n \leq 2^k$, let $\mathbb{P}_{F}^{n}$ denote the subgraph of $\mathbb{P}$ comprising $F$ and all forests of size at least $2^k+m$ and strictly less than $2^k+n$ that are comparable to $F$. Then:
$$\mathbb{P}_{F}^{2^{k}}=\{F_1\} \times \mathbb{P}_{\overline{F}}^{2^{k-1}}.$$
\end{lemma}

We end the section with Figure \ref{fig:bratteli} which shows a portion of the Bratteli diagram $\mathbb{P}$. 

\begin{figure}[htp]

\centering

\includegraphics[width=\textwidth,center]{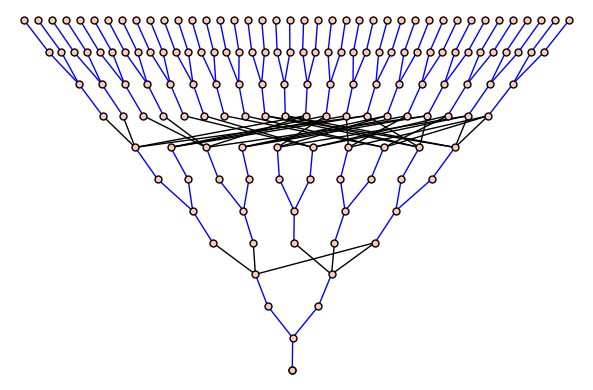}

\caption{Branching of irreducible representations for $n\leq11$.}

\label{fig:bratteli}

\end{figure}

\section{The one-dimensional representations of $P_n$}
\label{sec:one_dim}

We now turn to the subposet of one-dimensional representations of $\mathbb{P}$. Theorem 1 of \cite{MR3510808} states that the subgraph of odd partitions in Young's lattice is a binary tree that branches at every even level. We see that the subposet of one-dimensional representations of the family $\{P_n\}$ also has the structure of a binary tree (see Figure \ref{fig:rec_one}). We evince that these graphs are nonisomorphic by describing the structure of the subgraph of one-dimensional representations of $\mathbb{P}$, which we contrast with the description of the Macdonald tree in \cite{MR3510808}.

By Remark \ref{rem:dim} we conclude that an irreducible representation $\phi$ of $H_k$ is one-dimensional if $\phi=\Extpn(\phi_1)$ for an irreducible one-dimensional representation $\phi_1$ of $H_{k-1}$. 

\begin{definition}
\label{def:beta_k}
Define recursively a binary encoding of one-dimensional trees, $\beta_{2^k}$ acting on one-dimensional trees of height $k$ as below:
\begin{displaymath}
\beta_{2^k}(\tau)=
\begin{cases}
0\beta_{2^{k-1}}(T) & \tau=(r,\{T,T\}),\\
1\beta_{2^{k-1}}(T) & \tau=(r,\{T\}).
\end{cases}
\end{displaymath}
\end{definition}

For instance if for the tree $T$, $\beta_{2^{k-1}}(T)=b_1 b_1 \dotsc b_s$, then $\beta_{2^k}((r,\{T,T\}))=0b_1 b_1 \dotsc b_s$ and $\beta_{2^k}((r,\{T\}))=1b_1 b_1 \dotsc b_s$.

Thus we have an encoding of one-dimensional binary trees as binary strings. The family of maps $\beta_{2^k}$ may be extended to $\beta_n$, acting on every tree in a forest of size $n$. Thus, with $Bin(n)=\{k_1,\dotsc,k_s\}$:
\begin{equation}
\label{eq:beta_n}
\beta_n=\beta_{k_1} \times \dotsb \beta_{k_s}.
\end{equation}

\begin{definition}
\label{def:seqstring}
A sequence of strings of size $n$ is an ordered collection of binary strings $(b_1,\dotsc,b_s)$ where the length of the string $b_i$ is $k_i$ for $i=1,\dotsc,s$. 
\end{definition}

We now define an operation $\Res$ on binary strings, that is analogous to the operation of the same name defined on binary trees in Definition \ref{def:res_tree}:

\begin{definition}
\label{def:res_bin_string}
Given a binary string $b$ of length $k$, let $\overline{b}$ be the binary string of length $k-1$ obtained by removing the leading bit of $b$. Then
\begin{align*}
\Res(b) &= \overline{b} \times \Res(\overline{b}),\\ \nonumber
\Res(0) &=\emptyset,\\ \nonumber
\Res(1) &=\emptyset.
\end{align*}
\end{definition}

\begin{remark}
\label{rem:bin-res}
Observe that $\Res(b)= \{(\overline{b},\overline{\overline{b}},\dotsc)\}$. For instance $\Res(010)=\{10,0\}$. 
\end{remark}

\begin{lemma}
If $T$ is a one-dimensional tree of height $k$: $$\Res(\beta_{2^k}(T))=\beta_{2^k-1}(\Res(T)).$$
\end{lemma}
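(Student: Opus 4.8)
The plan is to prove the identity by induction on the height $k$, exploiting the fact that all three operations involved---$\Res$ on trees (Equation \eqref{eq:res_tree}), the encoding $\beta$ (Equation \eqref{eq:beta_k}), and $\mathbb{L}$ (Equation \eqref{eq:res_bin_string})---peel off exactly one level at the root. The first observation I would record is that for a one-dimensional tree the Type I and Type II cases of $\Res$ coincide: whether $T=(r,\{T',T'\})$ or $T=(r,\{T'\})$, Equation \eqref{eq:res_tree} gives $\Res(T)=T'\times\Res(T')$, a single forest of size $2^{k-1}+(2^{k-1}-1)=2^k-1$ whose first (and largest) tree is $T'$ and whose remaining trees form the forest $\Res(T')$.

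For the base case $k=1$ I would check the two height-one trees directly: $(r,\{\cdot,\cdot\})$ and $(r,\{\cdot\})$ encode to the single bits $0$ and $1$, both have down-set the trivial tree, and $\beta_1$ sends the trivial tree to the empty string, which matches $\mathbb{L}(0)=\mathbb{L}(1)=\emptyset$.

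For the inductive step, assume the claim for height $k-1$. Write $b=\beta_{2^k}(T)$, let $x\in\{0,1\}$ be its leading bit, and let $\overline b$ be $b$ with $x$ removed, so that by Equation \eqref{eq:beta_k} we have $\overline b=\beta_{2^{k-1}}(T')$. Applying $\beta_{2^k-1}$ tree-by-tree (Equation \eqref{eq:beta_n}) to the forest $\Res(T)=T'\times\Res(T')$ gives $\beta_{2^k-1}(\Res(T))=\overline b\times\beta_{2^{k-1}-1}(\Res(T'))$. On the other side, the recursion for $\mathbb{L}$ gives $\mathbb{L}(b)=\overline b\times\mathbb{L}(\overline b)$, and the induction hypothesis applied to $T'$ gives $\mathbb{L}(\overline b)=\mathbb{L}(\beta_{2^{k-1}}(T'))=\beta_{2^{k-1}-1}(\Res(T'))$. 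Comparing the two expressions yields $\mathbb{L}(b)=\beta_{2^k-1}(\Res(T))$, completing the induction.

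The step I expect to require the most care is the justification that $\mathbb{L}$ may legitimately discard the leading bit $x$. Conceptually this is the heart of the lemma: the leading bit records whether $T$ is the positive or negative extension of $\phi\otimes\phi$, but both $\Extp(\phi)$ and $\Extn(\phi)$ restrict to $\phi\otimes\phi$ on $H_{k-1}\times H_{k-1}$ and hence have the same image under $r_{2^k}$; this is precisely why the two branches of $\Res$ collapse and why $\mathbb{L}$ need not consult $x$. I would also make sure the bookkeeping of the empty string versus the trivial tree (and the identification of a length-one sequence of strings with the string itself) is handled consistently at the root, so that the prepending operation $\times$ is interpreted the same way on both sides of the identity.
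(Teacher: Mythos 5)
Your proof is correct and takes essentially the same route as the paper's: induction on the height $k$, with the key step being exactly the observation that $\beta_{2^{k-1}}(T')=\overline{\beta_{2^k}(T)}$ by the construction of $\beta$, so that the recursions $\Res(T)=T'\times\Res(T')$ and $\mathbb{L}(b)=\overline{b}\times\mathbb{L}(\overline{b})$ match term by term. Your write-up merely makes explicit some details the paper leaves implicit, namely the collapse of the Type I and Type II cases of $\Res$ for one-dimensional trees and the base-case identification of the empty string with the restriction of a height-one tree.
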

\begin{proof}
This is a straightforward proof by induction. Recall from Definition \ref{def:res_tree} that $\Res(T)= T \times \Res(T_1)$, where $T_1$ is the single unique subtree of the one-dimensional tree $T$. For $k=1$, the lemma is true by definition.

Assume it is true for all trees of height less than $k$. It is true also for $k$ if $\beta_{2^{k-1}}(T_1)=\overline{\beta_{2^k}(T)}$. This is so by the construction of $\beta$. 
\end{proof}  

This verifies that the operation $\Res$ defined on binary strings returns the down-set of the corresponding one-dimensional binary tree. We may extend this operation to act on sequences of binary strings in a manner analogous to Equation \ref{def:res_forest}. Given a sequence of strings $S=(b_1,\dotsc,b_s)$ of size $n$:

\begin{equation}
\label{eq:res_bin_sequence}
\Res(S) = (b_1,\dotsc,b_{s-1}) \times \Res(b_s).
\end{equation}

\begin{corollary}
\label{cor:cor_dim}
If $F$ is a one-dimensional forest of size n: $$\Res(\beta_{n}(F))=\beta_{n}(\Res(F)).$$  
\end{corollary}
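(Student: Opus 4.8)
The plan is to reduce this forest-level statement to the single-tree identity already established in the preceding Lemma, exploiting the fact that both $\mathbb{L}$ (on sequences) and $\Res$ (on forests) act only on the last, and hence smallest, component. No real induction on the number of trees is needed: once the two operations are unwound once, the previous Lemma disposes of the last component while the first $s-1$ components are carried along unchanged on both sides.

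First I would fix notation. Write $Bin(n)=\{k_1,\dots,k_s\}$ with $k_1>\dots>k_s$ and $F=(T_1,\dots,T_s)$, where $T_i$ is a one-dimensional tree of height $k_i$, and set $b_i=\beta_{2^{k_i}}(T_i)$, so that $\beta_n(F)=(b_1,\dots,b_s)$ by Equation \eqref{eq:beta_n}. Applying the definition of $\mathbb{L}$ on sequences, Equation \eqref{eq:res_bin_sequence}, gives $\mathbb{L}(\beta_n(F))=(b_1,\dots,b_{s-1})\times\mathbb{L}(b_s)$. The preceding Lemma, applied to the one-dimensional tree $T_s$ of height $k_s$, then rewrites the last factor as $\mathbb{L}(b_s)=\mathbb{L}(\beta_{2^{k_s}}(T_s))=\beta_{2^{k_s}-1}(\Res(T_s))$, so that $\mathbb{L}(\beta_n(F))=(b_1,\dots,b_{s-1})\times\beta_{2^{k_s}-1}(\Res(T_s))$.

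For the right-hand side, Equation \eqref{eq:res_forest} gives $\Res(F)=(T_1,\dots,T_{s-1})\times\Res(T_s)$, a forest of size $n-1$. The bookkeeping step is to verify that $\beta_{n-1}$ factors compatibly on this forest. Since $k_s$ is the smallest element of $Bin(n)$, subtracting $1$ leaves the higher bits untouched and replaces $2^{k_s}$ by $2^{k_s}-1$, i.e.\ $Bin(n-1)=\{k_1,\dots,k_{s-1}\}\sqcup Bin(2^{k_s}-1)$. Feeding this decomposition into Equation \eqref{eq:beta_n} yields $\beta_{n-1}(\Res(F))=(b_1,\dots,b_{s-1})\times\beta_{2^{k_s}-1}(\Res(T_s))$, which is exactly the expression obtained for $\mathbb{L}(\beta_n(F))$, completing the argument.

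The hard part will be the arithmetic compatibility just described, together with the degenerate case $k_s=0$. There $T_s$ is the trivial tree, $b_s$ is the empty string, $\Res(T_s)=\emptyset$, and $\Res(F)$ is just $(T_1,\dots,T_{s-1})$; one must confirm from the base conventions of $\mathbb{L}$ and of $\beta$ on the trivial/empty tree that both sides correctly collapse to $(b_1,\dots,b_{s-1})$, so that the equality persists. This edge case is the only place where the notational overloading of $\emptyset$ for the empty string, empty multiset, and empty forest needs to be pinned down carefully; the generic case $k_s\geq 1$ follows immediately from the composition above.
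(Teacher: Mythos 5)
Your proposal is correct and takes essentially the same approach the paper intends: the paper states this corollary without any proof, treating it as immediate from the preceding Lemma combined with Equations \eqref{eq:res_forest} and \eqref{eq:res_bin_sequence}, which is exactly the one-step unwinding of the last component that you carry out. Your additional bookkeeping --- the decomposition $Bin(n-1)=\{k_1,\dots,k_{s-1}\}\sqcup Bin(2^{k_s}-1)$ and the odd-$n$ edge case where $b_s$ is empty --- simply makes explicit conventions the paper leaves implicit, and is accurate.
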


The result of Corollary \ref{cor:cor_dim} is that we may identify subposet of one-dimensional representations of $\mathbb{P}$ with a poset generated by sequences of binary strings with $\Res$ providing the partial order. We denote by $\mathbb{B}$ the set of all sequences of strings of all positive integers. 

\begin{theorem}
\label{th:main}
The subgraph of one-dimensional irreducible representations for $\{P_n\}_{0 \leq n \leq s}$ is isomorphic to $(\mathbb{B},\Res)$.
\end{theorem}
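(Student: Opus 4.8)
The plan is to assemble the statement from the structural results already established in the excerpt, treating the theorem as the culmination of the chain of bijections and operator-compatibility lemmas. The strategy is to verify that $\beta$ is a poset isomorphism, which decomposes into two claims: that $\beta_n$ is a bijection on vertices (one-dimensional forests of size $n$ onto sequences of strings of size $n$), and that $\beta$ carries the covering relation of the restriction order on $\mathbb{O}$ to the covering relation induced by $\mathbb{L}$ on $\mathbb{B}$.

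First I would establish the bijection on vertices. The map $\theta_n$ of Equation~\eqref{eq:bij_n} is already a bijection between representations and forests, so it suffices to show $\beta_n$ restricts to a bijection between one-dimensional forests and sequences of binary strings. For a single tree this follows by induction on the height $k$: the recursion in Equation~\eqref{eq:beta_k} shows that prepending a $0$ or a $1$ to $\beta_{2^{k-1}}(T)$ accounts exactly for the two one-dimensional extensions (Type I and Type II) of each one-dimensional tree $T$ of height $k-1$, matching the description of $\mathbb{O}_{2^k}$ as the union $\cup_T(\{(r,\{T,T\})\}\cup\{(r,\{T\})\})$. Thus $\beta_{2^k}$ is a length-$k$ binary string and the map is bijective at each height. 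For forests one extends componentwise via Equation~\eqref{eq:beta_n}, and since $Bin(n)$ is determined by $n$, the sequence of string-lengths records exactly which heights occur; hence $\beta_n$ is a bijection onto sequences of strings of size $n$.

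Next I would show the isomorphism respects the partial orders. By Corollary~\ref{cor:cor_dim}, $\mathbb{L}(\beta_n(F))=\beta_{n-1}(\Res(F))$ for every one-dimensional forest $F$. By the definition of the restriction order on $\mathbb{P}$ (and its inheritance by $\mathbb{O}$), a one-dimensional forest $y$ of size $n$ covers a one-dimensional forest $x$ of size $n-1$ precisely when $x\in\Res(y)=y^-$, where by the dimension argument preceding Equation~\eqref{eq:res_bin_string} the down-set of a one-dimensional tree contains a single one-dimensional forest. Applying $\beta$ and invoking the corollary, $x\in\Res(y)$ holds if and only if $\beta_{n-1}(x)\in\mathbb{L}(\beta_n(y))$; since $\beta$ is a bijection these equivalences are genuine, so the covering relations match in both directions. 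Because both posets are graded by size and covers determine the order, this suffices to conclude $\beta$ is an order isomorphism $(\mathbb{O},\Res)\cong(\mathbb{B},\mathbb{L})$.

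The main obstacle I anticipate is bookkeeping rather than conceptual: the operators $\mathbb{L}$ and $\Res$ are defined on forests by acting only on the last component (Equations~\eqref{eq:res_forest} and~\eqref{eq:res_bin_sequence}), so one must confirm that restricting to one-dimensional objects does not break the correspondence when $Bin(n)$ and $Bin(n-1)$ differ in structure—that is, one must check the branching is still multiplicity-free in the one-dimensional subposet and that the unique one-dimensional forest in each down-set survives under $\beta$. This is controlled by Proposition~\ref{cor:mult} together with the dimension considerations, so the work is to verify these ingredients interlock cleanly. Once that is checked, the theorem follows by combining the vertex bijection with the cover-preservation established from Corollary~\ref{cor:cor_dim}.
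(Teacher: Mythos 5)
Your proposal is correct and takes essentially the same route as the paper, which deduces Theorem~\ref{th:main} directly from Corollary~\ref{cor:cor_dim} by treating $\beta$ as the identification of $\mathbb{O}$ with $(\mathbb{B},\mathbb{L})$. Your explicit verifications---that $\beta_n$ is a bijection on vertices via the $0/1$-prefix recursion, and that covers correspond under $\mathbb{L}$ using the dimension argument for uniqueness of the one-dimensional forest in each down-set---merely fill in details the paper leaves implicit.
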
 
\begin{proof}
From Equation \eqref{eq:beta_n} there is a bijection between one-dimensional representations of $P_n$ and sequences of binary strings of size $n$. The down-set of a one-dimensional forest is a singleton set. From Corollary \ref{cor:cor_dim} we see that the operation $\Res$ acting on sequences of strings corresponding to a forest $F$ returns the binary encoding under Equation \eqref{eq:beta_n} of the unique element in $F^{-}$.
\end{proof}

Given a binary string $S$, let $F$ denote the forest it corresponds to. Then we define the down-set $S^{-}$ and the up-set $S^{+}$ to be $F^{-}$ and $F^{+}$ respectively. Note that $S^{-}$ is a singleton set. 

\begin{theorem}
\label{th:mainB}
Given an integer $n$ and a sequence of strings $S$ of size $n$ corresponding to a forest $F$, define $S(1)$ to be the longest string in $S$, and define $\overline{S}$ to be the sequence $S$ without $S(1)$. Similarly define $S_{min}$ to be the smallest string in $S$ and $\underline{S}$ to be the sequence $S$ without $S_{min}$. 
\begin{enumerate}
\item
The down-set of $S$ is given by:
\begin{align*}
S^{-}= \underline{S} \times S_{min}^{-}
\end{align*}

\item
Partition $S$ as the tuple $S_1\times S_2$, where $S_1$ is the tuple of strings in $S$ with more than $d$ bits, and $S_2$ is the tuple of strings with less than $d$ bits. 

The up-set of $S$ is given by:
\begin{align*}
S^{+}=
\begin{cases}
\{S_1 \times 0S_2(1), S_1 \times 1S_2(1)\} & S_2(1) \in \overline{S_2}^{+},\\
\emptyset & \text{ otherwise}
\end{cases}
\end{align*}
\end{enumerate}
\end{theorem}
\begin{remark}
$(\mathbb{B},\Res)$ (hereafter referred to as $\mathbb{B}$ when there is no ambiguity) is a binary tree that branches at every even level. Let $\mathbb{B}_k$ denote the first $2^k-1$ levels of $\mathbb{B}$. The following procedure constructs $\mathbb{B}_k$ recursively:
\begin{enumerate}
\item For each binary string $b$ of length $k-1$, let $v_b= (b,\overline{b},\overline{\overline{b}},\dotsb,\emptyset)$.
\item To each vertex $v_b$ of $\mathbb{B}_{k-1}$, attach two copies of $\mathbb{B}_{k-1}$, and denote them the left and right subtree of $v_b$.
\item Change the label of each vertex $v$ of the left subtree by appending the string $0b$ to the sequence. Similarly append $1b$ to the string labelling each vertex on the right subtree. 
\end{enumerate}. 
Figure \ref{fig:rec_one} uses this method to build the structure $\mathbb{B}_{3}$ from $\mathbb{B}_{2}$.
\end{remark}

\begin{figure}
    \centering
    \begin{minipage}[b]{0.4\textwidth}
        \centering
        \includegraphics[width=0.5\textwidth]{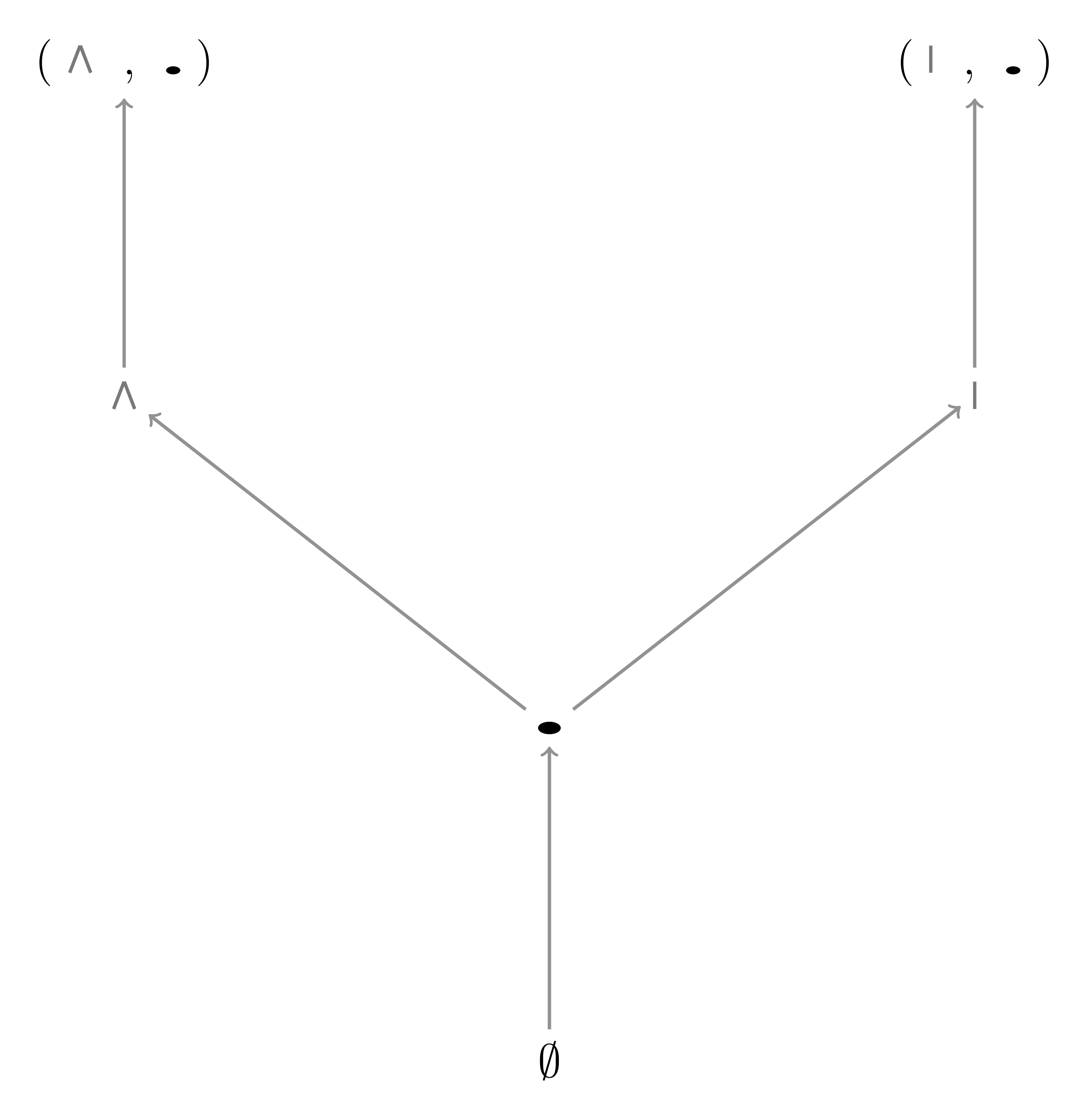} 
        \caption*{$\mathbb{B}_2$}
    \end{minipage}\hfill
    \begin{minipage}[b]{0.9\textwidth}
        \centering
        \includegraphics[width=\textwidth]{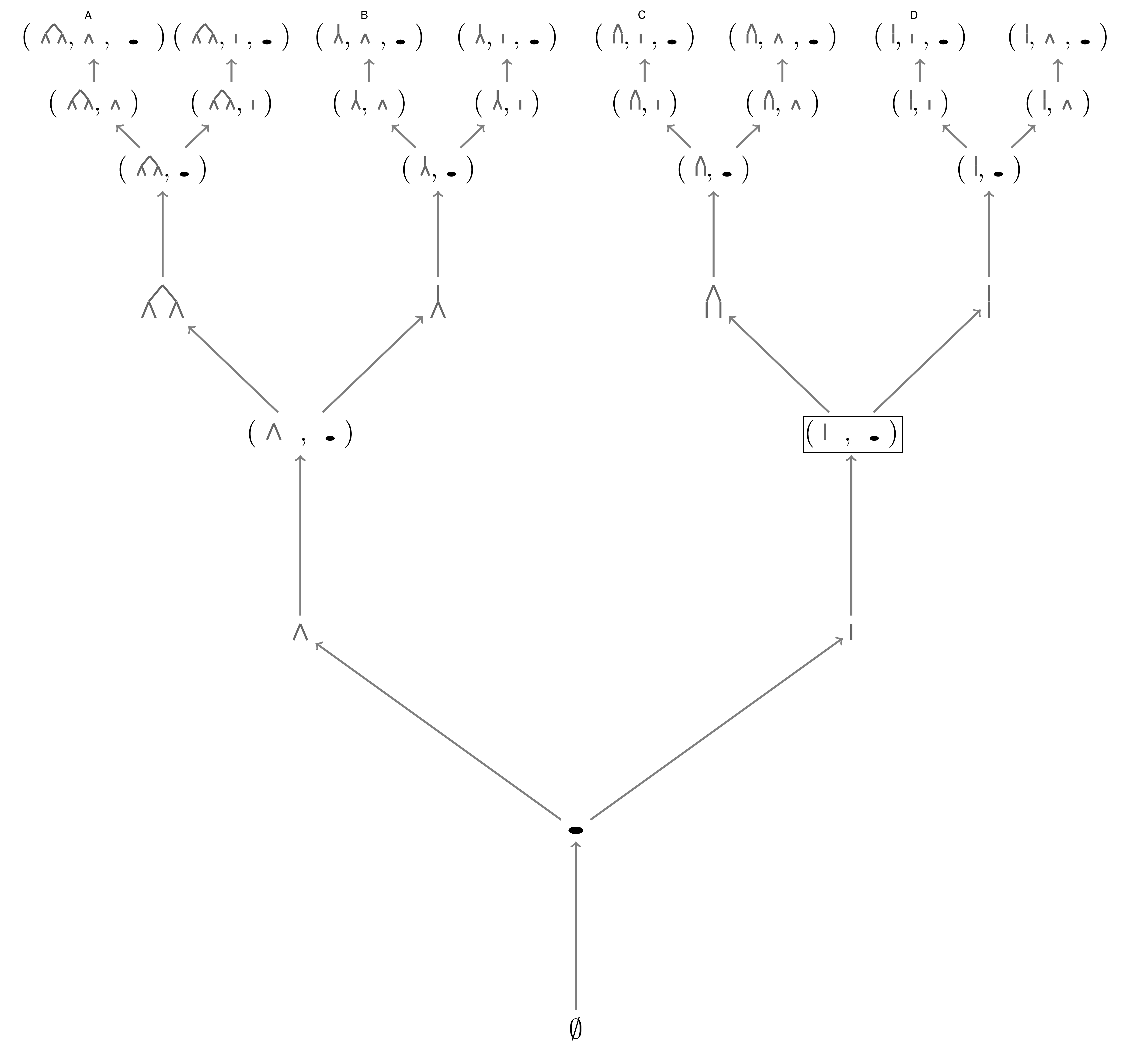} 
        \caption*{$\mathbb{B}_3$}
    \end{minipage}
\caption{$\mathbb{B}_3$ is built recursively by attaching two copies of $\mathbb{B}_2$ to appropriate nodes on the maximal level of $\mathbb{B}_2$. Nodes on the highest level of $\mathbb{B}_3$ that further propagate are labelled A-D.}
\label{fig:rec_one}
\end{figure}
A recursive construction of the Macdonald tree can be found in \cite{MR3510808}. In particular the Macdonald tree has only two infinite rays. The subgraph  $\mathbb{B}$ by contrast has an infinite number of infinite rays, since each binary string $b$ can be extended by attaching $\epsilon=0,1$ to the left of $b$, and between the vertices $\epsilon b$ and $b$, there is a unique path in $\mathbb{B}$.

\begin{corollary}
\label{cor:main}
The Macdonald tree is not isomorphic to $\mathbb{B}$.
\end{corollary}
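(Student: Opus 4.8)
The plan is to separate the two trees by a graph-theoretic invariant, namely the number of \emph{ends}. Recall that a \emph{ray} in a rooted tree is an infinite sequence of vertices $v_0, v_1, v_2, \dots$ in which each $v_{i+1}$ covers $v_i$, and that two rays are declared equivalent if they eventually coincide; the equivalence classes are the ends of the tree. Because in a tree two distinct ascending paths issuing from the root never meet again once they separate, the number of ends equals the number of infinite rays starting at the root (the root being the unique minimal element, which any poset isomorphism must preserve). The number of ends is itself a graph isomorphism invariant, so it suffices to show that the Macdonald tree and $\mathbb{O}$ have different numbers of ends.

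First I would read off the end-count of the Macdonald tree from the recursive description recalled above. At the $2^{s+1}$ level the only vertices that propagate --- i.e.\ that lie on some infinite ray --- are the two extreme hooks $(2^{s+1})$ and $1^{2^{s+1}}$; every other vertex is the root of a finite subtree. Consequently there are exactly two infinite rays from the root, the ``row'' ray through the one-row partitions and the ``column'' ray through the one-column partitions, so the Macdonald tree has exactly two ends.

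Next I would count the ends of $\mathbb{O}$ using its identification with $(\mathbb{B},\mathbb{L})$ from Theorem \ref{th:main}. For each finite binary string $b$, repeatedly prepending bits produces an infinite ray, and two rays built from distinct seed strings $b \neq b'$ remain distinct strings at every level and hence never coincide, so they determine distinct ends. Since there are infinitely many finite binary strings, $\mathbb{O}$ has infinitely many ends. As $2 \neq \infty$, no isomorphism can carry the Macdonald tree onto $\mathbb{O}$, which is exactly the assertion of the corollary.

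The only point demanding care --- and thus the main obstacle --- is to make the two end-counts commensurable: one must confirm that ``propagating vertex'' in each recursion coincides with ``vertex lying on an infinite ray,'' and that the seed-to-ray assignment for $\mathbb{B}$ genuinely yields inequivalent ends rather than rays that merge. Both facts fall straight out of the respective recursive constructions, since the Macdonald recursion attaches only finite blocks off the two extreme hooks while in $\mathbb{B}$ distinct seeds keep the low-order bits, and hence the vertices, permanently distinct. Once the invariant is fixed the conclusion is formal.
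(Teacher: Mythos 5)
Your proposal is correct and follows essentially the same route as the paper: the paper also distinguishes the two graphs by counting infinite rays, noting that the Macdonald tree has exactly two (through $(2^{s+1})$ and $1^{2^{s+1}}$) while in $\mathbb{B}$ every binary string $b$ extends to $\epsilon b$ for $\epsilon = 0,1$, yielding infinitely many rays. Your only embellishments are packaging the count as the end-invariant of the tree and the (harmless, easily repaired) overstatement that distinct seeds always give distinct ends --- when $b' = \epsilon b$ the rays may merge, but distinct strings of a fixed length already lie on pairwise distinct rays, which suffices, and matches the paper's own level of detail.
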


These differences may be observed by contrasting Figure \ref{fig:amritree} to Figure \ref{fig:bratelli}.

\begin{figure}[htp]
\centering
\includegraphics[width=\textwidth,center]{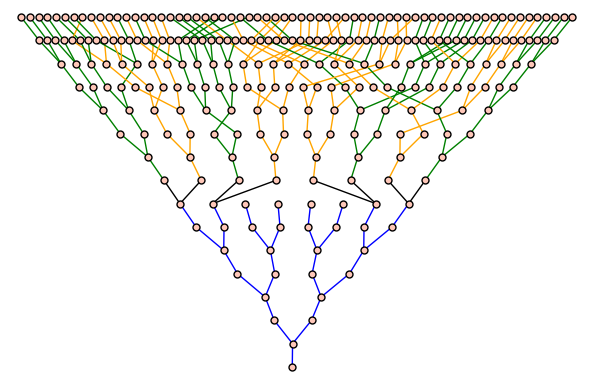}
\caption{The Macdonald tree for levels $n \leq15$}
\label{fig:amritree}
\end{figure}

\begin{figure}[htp]
\centering
\includegraphics[width=\textwidth,center]{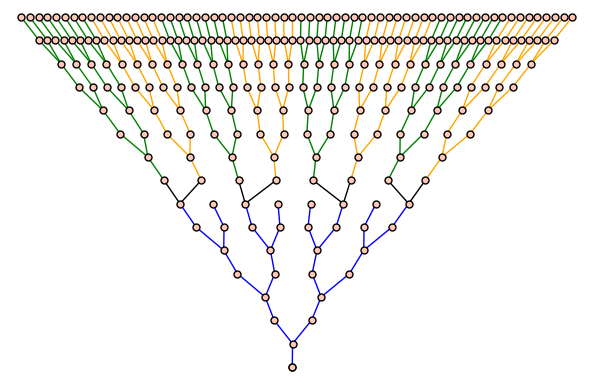}
\caption{The subgraph of one-dimensional representations of $P_n$ for levels $n \leq15$.}
\label{fig:bratelli}
\end{figure}

\section{Restrictions of odd-dimensional representations}
\label{sec:res}
In this section we consider the irreducible representations that occur in the restriction of an odd-dimensional representation of a symmetric group to a Sylow 2-subgroup. We  provide a recursive formula for the multiplicities of every irreducible representation of the subgroup when $n=2^k$. In general we have only a sufficient condition for a representation to occur. An interesting bijection between odd partitions of $n$ and one-dimensional representations of a Sylow subgroup of $S_n$ was found by Giannelli et al. in \cite{MR3687936}. Here it was shown that for $n=2^k$, a unique one-dimensional representation occurs in the restriction of a representation corresponding to hook partitions. In this case the bijection maps the hook partition to this representation. Using a result in \cite{MR3510808} on a unique decomposition of odd-partitions into hooks, they extended this bijection to all positive integers. We give a recursive description of this unique one-dimensional representation when $n=2^k$. Throughout this section, binary trees of Type I, II and III are denoted by $(r,\{T_1,T_1\})$, $(r,\{T_1\})$ and $(r,\{T_1,T_2\})$ respectively. For trees of Type I and Type II, define $\text{ext}(T)=1$ if $T$ is of Type I and $\text{ext}(T)=-1$ if T is of Type II. 

\begin{definition}
\label{def:moment}
Given two class functions $\chi_1$ and $\chi_2$ of a group $G$
\begin{displaymath}
\langle \chi_1, \chi_2 \rangle_{G} = \frac{1}{|G|}\sum_{g \in G} \chi_1(g^{-1})\chi_2(g).
\end{displaymath}
\end{definition}

Let $\chi_\lambda$ denote the irreducible character of the symmetric group $S_{n}$ corresponding to the partition $\lambda$ of n, and let $\chi_F$ denote the irreducible representation of $P_n$ corresponding to the forest $F$ of size $n$. We know that $\langle \chi_\lambda, \chi_F \rangle_{P_n}$ is the multiplicity of $\chi_F$ in the restriction of $\chi_\lambda$ to $P_n$. 

\begin{theorem}
\label{th:mainC}
Given a partition $\lambda$ of $2^k$ and a tree $T$ of height $k$, let $r_{\lambda T}=\langle \chi_\lambda, \chi_T \rangle_{H_k}$. Then we have:
\begin{align*}
r_{\lambda T} = 
\begin{cases}
A(T_1,\lambda)+\frac{1}{2}r_{\frac{\lambda}{2}T_1}^2+\frac{\text{ext}(T)(-1)^{\text{vert}(\lambda)}}{2}r_{\frac{\lambda}{2}T_1} & \text{T Type I or II},\\
B(T_1,T_2,\lambda)+r_{\frac{\lambda}{2}T_1}r_{\frac{\lambda}{2}T_2} & \text{T Type III.}
\end{cases} 
\end{align*}
The quantities A and B are defined as under
\begin{align}
\label{eq:A}
A(T_1,\lambda)&=\sum_{(a_1|b_1)}r_{(a_1|b_1)T_1}(
r_{(a-a_1|b-b_1-1)T_1}+r_{(a-a_1-1|b-b_1)T_1}),
\end{align}
where the sum is over hook partitions $(a_1|b_1)$ of $2^{k-1}$ with $\frac{\lambda}{2} \neq (a_1|b_1)$ and $ a \geq a_1 >\frac{a}{2}$, and
\begin{align}
\label{eq:B}
B(T_1,T_2,\lambda)&=\sum_{(a_1|b_1)}r_{(a_1|b_1)T_1}(r_{(a-a_1|b-b_1-1)T_2}+r_{(a-a_1-1|b-b_1)T_2})+ \\
\nonumber
&r_{(a_1|b_1)T_2}(r_{(a-a_1|b-b_1-1)T_1}+r_{(a-a_1-1|b-b_1)T_1}),
\end{align}
where the sum is over the same range as \eqref{eq:A}.
\end{theorem}
\begin{proof}
We write out the expression for $r_{\lambda T}$ as in Definition \ref{def:moment}
$$r_{\lambda T}= \frac{1}{|H_k|}\sum_{g \in H_k} \chi_\lambda(g)\chi_T(g),$$
noting that $\chi_\lambda(g^{-1})=\chi_\lambda(g)$.

The set of elements of $H_{k}$ may be split into two sets: $\{(\sigma_1,\sigma_2)^{1}|\sigma_1,\sigma_2 \in H_{k-1}\}$ and $\{(\sigma_1,\sigma_2)^{-1}|\sigma_1,\sigma_2 \in H_{k-1}\}$. Splitting the sum thus
\begin{align}
\label{eq:moment}
r_{\lambda T}=\frac{1}{|H_k|}\sum_{g=(\sigma_1,\sigma_2)^1}\chi_\lambda(g)\chi_T(g)+\frac{1}{|H_k|}\sum_{g=(\sigma_1,\sigma_2)^{-1}}\chi_\lambda(g)\chi_T(g).
\end{align}

The set $\{(\sigma_1,\sigma_2)^{1}|\sigma_1,\sigma_2 \in H_{k-1}\}$ is the set of all elements of $H_{k-1} \times H_{k-1}$. Since $H_{k-1} \times H_{k-1} \subset S_{2^{k-1}}\times S_{2^{k-1}}$, we have
$$\chi_\lambda((\sigma_1,\sigma_2)^1)= \sum_{\mu,\nu \vdash 2^{k-1}}c_{\mu,\nu}^{\lambda}\chi_\mu(\sigma_1)\chi_\nu(\sigma_2).$$ 
We substitute this into the expression the first sum in Equation \eqref{eq:moment}
\begin{multline*}
\frac 1{|H_k|}\sum_{g=(\sigma_1,\sigma_2)^1}\chi_\lambda(g)\chi_T(g)=\\ \frac 1{|H_k|}\sum_{\sigma_1,\sigma_2 \in H_{k-1}}\;\sum_{\mu,\nu \vdash 2^{k-1}}c_{\mu,\nu}^{\lambda}\chi_\mu(\sigma_1)\chi_\nu(\sigma_2)\chi_T((\sigma_1,\sigma_2)^{1}).
\end{multline*}
Substituting the value of the character corresponding to $T$, and $|H_k|=2|H_{k-1}|^2$ into this equation, we have:
\begin{itemize}
\item
For $T$ of Type I or Type II:
\begin{align*}
\frac{1}{|H_k|}\sum_{g=(\sigma_1,\sigma_2)^1}\chi_\lambda(g)\chi_T(g)&=
\frac{1}{2}\sum_{\mu,\nu \vdash 2^{k}}r_{\mu T_1}r_{\nu T_1}.
\end{align*}
\item
For $T$ of Type III:
\begin{align*}
\frac{1}{|H_k|}\sum_{g=(\sigma_1,\sigma_2)^1}\chi_\lambda(g)\chi_T(g)&=
\frac{1}{2}\sum_{\mu,\nu \vdash 2^{k}}r_{\mu T_1} r_{\nu T_2} +r_{\mu T_2} r_{\nu T_1}.
\end{align*}
\end{itemize}
It is known that $c^{\lambda}_{\mu,\nu}=c^{\lambda}_{\nu,\mu}$, and from Proposition \ref{prop:lrcoeff} we know that $c^{\lambda}_{\mu,\nu}=1$ when $\mu= (a_1|b_1) \subset \lambda$ and $\nu =(a-a_1|b-b_1-1)$ or $\nu =(a-a_1-1|b-b_1-)$. This identification imposes the condition $a_1 > \frac{a}{2}$. Recall also that $c^{\lambda}_{\mu,\mu}=1$ iff $\mu=\frac{\lambda}{2}$.  Accounting for this, we modify the summation as under:

\begin{align}
\label{eq:part1}
\frac{1}{|H_k|}\sum_{g=(\sigma_1,\sigma_2)^1}\chi_\lambda(g)\chi_T(g)=
\begin{cases}
A(T_1,\lambda)+\frac{1}{2}r_{\frac{\lambda}{2}T_1}^2 & T \text{ Type I /Type II},\\
B(T_1,T_2,\lambda)+r_{\frac{\lambda}{2}T_1}r_{\frac{\lambda}{2}T_2} &T \text{  Type III},
\end{cases}
\end{align}
where $A(T_1,\lambda)$ and $B(T_1,T_2,\lambda)$ are defined as in Equations \eqref{eq:A} and \eqref{eq:B} respectively. 

The second sum in Equation \eqref{eq:moment} may be simplified as under:
\begin{align*}
\frac{1}{|H_k|}\sum_{g=(\sigma_1,\sigma_2)^{-1}}\chi_\lambda(g)\chi_T(g)=\frac{1}{|H_k|}\sum_{g=(\Id,\sigma)^{-1}}|[g]|\chi_\lambda(g)\chi_T(g),
\end{align*}
where the sum is now over representatives $\sigma$ of distinct conjugacy classes of $H_{k-1}$. Since $|(\Id,\sigma)^{-1}|= |H_{k-1}||[\sigma]|$:
\begin{align*}
\frac{1}{|H_k|}\sum_{g=(\sigma_1,\sigma_2)^{-1}}\chi_\lambda(g)\chi_T(g)&=\frac{1}{|H_k|}\sum_{g=(\Id,\sigma)^{-1}}|H_{k-1}||[\sigma]|\chi_\lambda(g)\chi_T(g)\\
&=\frac{1}{2|H_{k-1}|}\sum_{g=(\Id,\sigma)^{-1}}|[\sigma]|\chi_\lambda(g)\chi_T(g)\\
&=\frac{1}{2}\sum_{\sigma \in H_{k-1}}\frac{\chi_\lambda((\Id,\sigma)^{-1})\chi_T((\Id,\sigma)^{-1})}{|H_{k-1}|}.
\end{align*}
Let $\nu$ denote the cycle type of an element $(\Id,\sigma)^{-1}$ for $\sigma \in H_{k-1}$. Then by the Murnaghan-Nakayama rule we have:
\begin{displaymath}
\chi_\lambda((\Id,\sigma)^{-1})= \sum_{T \in \text{BST}(\lambda,\nu)}(-1)^{ht(T)}.
\end{displaymath}

It is easy to prove that if the cycle type of $\sigma$ is $(1^{k_0}2^{k_1}4^{k_2}\dotsc)$, then the cyle type of $(\Id,\sigma)^{-1}$ is $(2^{k_0}4^{k_1}8^{k_2}\dotsc)$. Thus, in particular, every tableau $T$ of content $\nu$ has $i$-strips (see Definition \ref{def:istrip}) of even size. Thus by Proposition \ref{prop:half} for $T \in \text{BST}(\lambda,\nu)$:
\begin{align*}
(-1)^{ht(T)}&=(-1)^{\text{vert}(\lambda)}(-1)^{ht(\psi(T))}\\
&=(-1)^{\text{vert}(\lambda)}\chi_{\frac{\lambda}{2}}(\sigma).
\end{align*}
Then we have:
\begin{align*}
\frac{1}{2}\sum_{\sigma \in H_{k-1}}\frac{\chi_\lambda((\Id,\sigma)^{-1})\chi_T((\Id,\sigma)^{-1})}{|H_{k-1}|}=\frac{(-1)^{\text{vert}(\lambda)}}{2}\sum_{\sigma \in H_{k-1}}\frac{\chi_{\frac{\lambda}{2}}(\sigma)\text{ext}(T)\chi_{T_1}(\sigma)}{|H_{k-1}|}
\end{align*}
Thus:
\begin{align}
\label{eq:part2}
\frac{1}{|H_k|}\sum_{g=(\sigma_1,\sigma_2)^{-1}}\chi_\lambda(g)\chi_T(g)=
\begin{cases}
 \frac{\text{ext}(T)(-1)^{\text{vert}(\lambda)}}{2}r_{\frac{\lambda}{2}T_1} & T \text{ Type I/Type II},\\
0 &T \text{ of Type III}.
\end{cases}
\end{align}
Adding Equations \eqref{eq:part1} and \eqref{eq:part2} proves the theorem.
\end{proof} 

The bijection between odd partitions and one-dimensional representations of Sylow 2-subgroups introduced in \cite{MR3687936} maps a hook partition $\lambda$ of $2^k$ to the unique one-dimensional representation of $H_k$ occuring with odd multiplicity, which we denote by $\eta(\lambda)$. We think of $\eta(\lambda)$ as a binary string of length $k$ under the encoding described in Definition \ref{def:beta_k}.
 
\begin{corollary}
\label{cor:gian_bij}
Given a hook partition $\lambda=(a|b)$ of size $2^k$, $k \geq 0$, the representation $\eta(\lambda)$ is the unique one-dimensional representation occuring in $R(\lambda)$ and it occurs with multiplicity one. Further:
\begin{align*}
\eta(\lambda)= 
\begin{cases}
0\eta(\frac{\lambda}{2}) & a \text{ odd }, b \equiv 0 \mod 4,\\
1\eta(\frac{\lambda}{2}) & a \text{ odd }, b \equiv 2 \mod 4,\\
1\eta(\frac{\lambda}{2}) & a \text{ even }, b \equiv 1 \mod 4,\\
0\eta(\frac{\lambda}{2}) & a \text{ even }, b \equiv 3 \mod 4.\\ 
\end{cases}
\end{align*}
\end{corollary}
\begin{proof}
We shall prove this inductively, and skip the case $k=1$ since it is an easy computation. If the result is true for all integers less than $k$, let $\chi_T$ be the unique one-dimensional representation occuring in $\lambda$. By the induction hypothesis,  $$r_{\mu T_1}r_{\nu T_1}=0$$
for $\mu \neq \nu$. Thus:
$$r_{\lambda T}= \frac{r_{\frac{\lambda}{2} T_1}^2}{2}+
\frac{\text{ext}(T)(-1)^{\text{vert}(\lambda)}}{2}
r_{\frac{\lambda}{2} T_1}.$$

Either $T=(r,\{\eta(\frac{\lambda}{2}),\eta(\frac{\lambda}{2})\})$ or $T=(r,\{\eta(\frac{\lambda}{2})\})$. It is the former if $\text{vert}(\lambda)$ is even and the latter when $\text{vert}(\lambda)$ is odd. With $\frac{\lambda}{2}=(a_1|b_1)$, either $(a|b)= (2a_1+1|2b_1)$ or $(a|b)=(2a_1|2b_1+1)$; the number of vertical dominoes in $\lambda$ is even when $a$ is odd and $b\equiv 0mod4$ or when $a$ is even and $b \equiv 3mod4$, and is odd otherwise.
\end{proof}
For an integer $n$ with $Bin(n)=\{k_1,\dotsc,k_s\}$ and an odd partition $\lambda$ of $n$, \cite[Lemma 1]{MR3510808} states that $\lambda$ has a unique hook $h_1$ of size $2^{k_1}$, and $\lambda/h_1$ is an odd partition. We apply this recursively to obtain a decomposition of $\lambda$ into the tuple $(h_1,\dotsc, h_s)$ of hook partitions $h_i$ of size $k_i$, for $i=1,\dotsc,s$. We call $(h_1,\dotsc, h_s)$ the hook decomposition of $\lambda$.

\begin{definition}
The restriction set of a partition $\mu$ of size $n$ is:  $$R(\mu)=\{F|\langle \chi_\mu,\chi_F \rangle_{P_n} >0\},$$ where $\chi_F$ is the irreducible character corresponding to a forest $F$ of size $n$. 
\end{definition}

\begin{prop}
\label{prop:rest_n}
For an odd partition $\lambda$ of $n$ with hook decomposition $(h_1,\dotsc, h_s)$, we have:
\begin{displaymath}
R(h_1)\times \dotsb \times R(h_s) \subset R(\lambda)
\end{displaymath} 
\end{prop}
\begin{proof}
Since $P_n \subset S_{2^{k_1}} \times \dotsb \times S_{2^{k_s}}$, we have $$\Res^{S_n}_{P_n}\chi_\lambda=\sum_{\mu_i \vdash 2^{k_i} \text{ for } 1,\dotsc,s}c^{\lambda}_{\mu_1,\mu_2}c^{\mu_2}_{\mu_3,\mu_4}\dotsc c^{\mu_{s-2}}_{\mu_{s-1},\mu_s}\prod_{i}\Res^{S_{2^{k_i}}}_{H_{k_i}}\chi_{\mu_i}.$$
Thus given a forest $F=(T_1,\dotsc,T_s)$ of size $n$,
\begin{displaymath}
\langle \chi_\lambda, \chi_F \rangle_{P_n}= \sum_{\mu_i \vdash 2^{k_i} \text{ for } 1,\dotsc,s}c^{\lambda}_{\mu_1,\mu_2}c^{\mu_2}_{\mu_3,\mu_4}\dotsc c^{\mu_{s-2}}_{\mu_{s-1},\mu_s}\prod_i r_{\mu_i T_i} .
\end{displaymath}
By Proposition \ref{prop:lr_n}, we know that the product $c^{\lambda}_{h_1,h_2}c^{h_2}_{h_3,h_4}\dotsc c^{h_{s-2}}_{h_{s-1},h_s}=1$. Thus for a forest $F=(T_1,\dotsc,T_s)$ where $T_i \in R(h_i)$, we see that $\langle \chi_\lambda, \chi_F \rangle_{P_n}>0$. 
\end{proof}

\section{Some generating functions}
\label{sec:gen_func}

In this section we find generating functions for conjugacy classes collected by class size and irreducible representations collected by dimension.

\begin{prop}
\label{th:gen_rep}
Let $a_{km}$ denote the number of irreducible representations of $H_k$ of dimension $2^m$. Let $g_k(t)= \sum_{m\geq 0}a_{km}t^m$. Then $g_k(t)$ satisfies the recurrence relation:

\begin{equation}
\label{eq:conjrec}
g_k(t)=\frac t2[g_{k-1}(t)^2-g_{k-1}(t^2)]+2g_{k-1}(t^2).
\end{equation}
\end{prop}

\begin{proof}
Note that by Definition \ref{def:bij_rep} we have $g_k(t)= \sum_{T} t^{log_2(\Dim(T))}$, where $T$ runs over all 1-2 binary trees of height $k$. Let $T^{\pm}$ denote the trees $(r,\{T,T\})$ and $(r,\{T\})$ respectively. From Remark \ref{rem:dim} we know that each of their dimensions satisfies the relation:$$t^{log_2(\Dim(T^{\pm})}=t^{2log_2(\Dim(T))}.$$
Thus each of the set of Type I and Type II trees are enumerated by $g_{k-1}(t^2)$ in the generating function.

Given two distinct trees $T_1$ and $T_2$ of height $k-1$ let $T=(r,\{T_1,T_2\})$. Then from Remark \ref{rem:dim}
$$t^{log_2(\Dim(T)}=t^{log_2(\Dim(T_1)\Dim(T_2))+1}.$$
The factor $\frac {g_{k-1}(t)^2-g_{k-1}(t^2)}{2}$ is the sum $\sum_{T}t^{log_2(\Dim(T_1)\Dim(T_2))}$ over all trees $T$ of Type III. This must then be multiplied by $t$ to enumerate such trees by their dimension.
\end{proof}
\begin{remark}
In particular $g_k(2)$ is the sum of the dimensions of all representations, which is equal to the number of involutions of $H_k$. $g_k(4)$ is the sum of the squares of dimensions of representations, thus $g_k(4)= 2^{2^k-1}$. Substituting into the equation, we have $$g_k(2)= g_{k-1}(2)^2+ g_{k-1}(4).$$ The involutions of $H_k$ are enumerated by the elements $(\sigma_1,\sigma_2)^1$ for involutions $\sigma_1$ and $\sigma_2$ of $H_{k-1}$, and by the elements $(\sigma,\sigma^{-1})^{-1}$ where $\sigma$ is an element of $H_{k-1}$. 
\end{remark}

\begin{prop}
\label{th:gen_conj}
Let $b_{km}$ be the number of conjugacy classes of $H_k$ of size $2^m$. Define the generating function $f_k(t):= \sum_{m \geq 0} b_{km}t^m$.
The ordinary generating function $f_k(t)$ satisfies the recurrence relation:

\begin{equation}
\label{eq:irrec}
f_k(t)=\frac t2[f_{k-1}(t)^2-f_{k-1}(t^2)]+f_{k-1}(t^2)+t^{2^{k-1}-1}f_{k-1}(t).
\end{equation}
\end{prop}

\begin{proof}
By Definition \ref{def:bij_con} we have $f_k(t)=\sum_T t^{log_2(\Or(T))}$. The sizes of conjugacy classes may be found in Table \ref{table:conj_class}. 

Let $T^{\pm}$ be as defined in the last proof. Since $\Or(T^+)=\Or(T)^2$, the monomials in the sum $f_k(t)$ corresponding to such trees are enumerated by the term $f_{k-1}(t^2)$. Since $\Or(T^{-}) = 2^{2^{k-1}-1}\Or(T)$, the monomials corresponding to such trees are enumerated by $t^{2^{k-1}-1}f_{k-1}(t)$.

Given two distinct trees $T_1$ and $T_2$ of height $k-1$, let $T=(r,\{T_1,T_2\})$. In this case we have $\Or(T)=2\Or(T_1)\Or(T_2)$, and as in the proof above, $\frac t2 g_{k-1}(t)^2-g_{k-1}(t^2)$ is an enumeration of the monomials corresponding to such trees. 
\end{proof}

The following corollary that generalises the above generating functions.

\begin{corollary}
Let $\alpha_{nm}$ denote the number of forests of size $n$ and dimension $m$ and let $\beta_{nm}$ denote the number of forests of size $n$ and order $m$. Define $G(v,t)= \sum_{n,m}\alpha_{nm}v^nt^m$ and $F(v,t)= \sum_{n,m}\beta_{nm}v^nt^m$. Then we have
\begin{align*}
G(v,t)=\prod_{i \geq 0} (1+g_{i}(t)v^{2^i})\\
F(v,t)=\prod_{i \geq 0} (1+f_{i}(t)v^{2^i}) 
\end{align*}
\end{corollary}
\section*{Acknowledgement}
I would like to thank my advisor Amritanshu Prasad for his guidance and advice. I am also grateful to Steven Spallone for his notes and for several helpful discussions.
\bibliography{notes-rewrite}

\begin{thebibliography}{10}

\bibitem{MR3510808}
Arvind Ayyer, Amritanshu Prasad, and Steven Spallone.
\newblock Odd partitions in {Y}oung's lattice.
\newblock {\em S\'{e}m. Lothar. Combin.}, 75:Art. B75g, 13, [2015-2018].

\bibitem{MR1503352}
A.~H. Clifford.
\newblock Representations induced in an invariant subgroup.
\newblock {\em Ann. of Math. (2)}, 38(3):533--550, 1937.

\bibitem{MR3687936}
Eugenio Giannelli.
\newblock Characters of odd degree of symmetric groups.
\newblock {\em J. Lond. Math. Soc. (2)}, 96(1):1--14, 2017.

\bibitem{MR1280461}
I.~Martin Isaacs.
\newblock {\em Character theory of finite groups}.
\newblock Dover Publications, Inc., New York, 1994.
\newblock Corrected reprint of the 1976 original [Academic Press, New York;
  MR0460423 (57 \#417)].

\bibitem{MR0028834}
L\'{e}o Kaloujnine.
\newblock La structure des {$p$}-groupes de {S}ylow des groupes sym\'{e}triques
  finis.
\newblock {\em Ann. Sci. \'{E}cole Norm. Sup. (3)}, 65:239--276, 1948.

\bibitem{MR409621}
Adalbert Kerber and J\"{u}rgen Tappe.
\newblock On permutation characters of wreath products.
\newblock {\em Discrete Math.}, 15(2):151--161, 1976.

\bibitem{MR3077152}
Donald~E. Knuth.
\newblock {\em The art of computer programming. {V}ol. 1}.
\newblock Addison-Wesley, Reading, MA, 1997.
\newblock Fundamental algorithms, Third edition [of MR0286317].

\bibitem{MR289677}
I.~G. Macdonald.
\newblock On the degrees of the irreducible representations of symmetric
  groups.
\newblock {\em Bull. London Math. Soc.}, 3:189--192, 1971.

\bibitem{MR3549625}
Gunter Malle and Britta Sp\"{a}th.
\newblock Characters of odd degree.
\newblock {\em Ann. of Math. (2)}, 184(3):869--908, 2016.

\bibitem{MR1855862}
Mark Shimozono and Dennis~E. White.
\newblock A color-to-spin domino {S}chensted algorithm.
\newblock {\em Electron. J. Combin.}, 8(1):Research Paper 21, 50, 2001.

\bibitem{oeis}
N.~J.~A. Sloane.
\newblock The on-line encyclopedia of integer sequences, published
  electronically at http://oeis.org, 2010, sequence {A006893}.

\bibitem{MR1676282}
Richard~P. Stanley.
\newblock {\em Enumerative combinatorics. {V}ol. 2}, volume~62 of {\em
  Cambridge Studies in Advanced Mathematics}.
\newblock Cambridge University Press, Cambridge, 1999.
\newblock With a foreword by Gian-Carlo Rota and appendix 1 by Sergey Fomin.

\end{thebibliography}
\bibliographystyle{plain}
\end{document}